\documentclass[english,reqno]{amsart}

\usepackage{amsmath, appendix, ulem}
\usepackage[nobysame]{amsrefs}
\usepackage{amssymb, color}
\usepackage[margin=1in]{geometry}

\usepackage{mathrsfs}
\usepackage{graphicx}
\usepackage{subfig}
\usepackage{float}
\usepackage{epsf}
\usepackage{hyperref}
\usepackage{titletoc}

\graphicspath{{../Figures/}}

\numberwithin{equation}{section}

\newtheorem{lemma}{Lemma}[section]
\newtheorem{theorem}{Theorem}[section]
\newtheorem{proposition}{Proposition}[section]

\newtheorem{remark}{Remark}[section]

\newcommand{\NullL} {{\rm Null} \, \mathcal{L}}

\newcommand{\BB}{\mathsf{B}}

\newcommand{\RR}{\mathbb{R}}

\newcommand{\Or}{\mathcal{O}}
\newcommand{\DD}{\mathsf{D}}

\newcommand{\II}{\mathsf{I}}
\newcommand{\Id}{\mathsf{Id}}
\newcommand{\FF}{\mathsf{F}}
\renewcommand{\AA}{\mathsf{A}}
\newcommand{\PP}{\mathsf{P}}

\newcommand{\xx}{{\boldsymbol{x}}}

\newcommand{\ee}{{\boldsymbol{e}}}

\newcommand{\ud}{\,\mathrm{d}}
\newcommand{\rd}{\mathrm{d}}
\newcommand{\veps}{\varepsilon}

\newcommand{\fE}{f^\text{E}}
\newcommand{\fO}{f^\text{O}}
\newcommand{\tE}{\mathrm{E}}
\newcommand{\tO}{\mathrm{O}}

\newcommand{\mc}[1]{\mathcal{#1}}

\DeclareMathOperator{\Span}{span}

\title[An AP method for transport equations with oscillatory scattering coefficients]{An asymptotic preserving method for transport equations with
  oscillatory scattering coefficients}

\author{Qin Li} 
\address{Mathematics Department, University of Wisconsin-Madison, 480 Lincoln Dr., Madison, WI 53705 USA.}
\email{qinli@math.wisc.edu}
\author{Jianfeng Lu}
\address{Department of Mathematics, Department of Physics and Department of Chemistry, Duke University, Box 90320, Durham, NC 27708 USA.}
\email{jianfeng@math.duke.edu}
\date{\today}

\thanks{The work of Q.L.~is supported in part by a start-up fund from
  UW-Madison and National Science Foundation under the grant
  DMS-1619778. The work of J.L. is supported in part by the National
  Science Foundation under the grant DMS-1415939.}

\begin{document}

\begin{abstract}
  We design a numerical scheme for transport equations with
  oscillatory periodic scattering coefficients. The scheme is
  asymptotic preserving in the diffusion limit as Knudsen number goes
  to zero. It also captures the homogenization limit as the length
  scale of the scattering coefficient goes to zero. The proposed
  method is based on the construction of multiscale finite element
  basis and a Galerkin projection based on the even-odd
  decomposition. The  method is analyzed in the asymptotic
  regime, as well as validated numerically.
\end{abstract}

\maketitle

\section{Introduction}

We study in this paper the linear transport equation with fast
oscillatory scattering coefficients in the fluid regime.
\begin{equation}\label{eqn}
  \varepsilon \partial_t f +  v\cdot\nabla_x f = \frac{1}{\veps} \sigma^{\delta} \mathcal{L}f\,,\qquad (t, x, v) \in [0, \infty) \times \Omega \times V\,.
\end{equation}
Here $\Omega \subset \RR^d$ is the spatial domain and $V$ is the
velocity space. For transport equation, the velocity space is
typically given by $\mathbb{S}$, the unit sphere in $\RR^d$. The
function $f(t, x, v) \geq 0$ is the distribution function which gives
the particle density on the phase space $(x,v)$. More
generally, we may use a variable $\xi$ to label certain physical state
of the particle so that $f = f(t, x, \xi)$ and the transport term
takes the form $v(\xi) \cdot \nabla_x f$ where $v(\xi)$ is the
velocity of a $\xi$-state particle. 

The linear transport equation has been extensively used to describe
dynamics of identical particles such as neutrons, photons and phonons
in an environment. The particles are free streaming (the advection
term $v \cdot \nabla_x f$ in \eqref{eqn}) unless they interact
(scatter) with the background media, modeled by the collision term on
the right hand side, $\mathcal{L}$ being the collision operator and
the amplitude $\sigma^{\delta}$ (always strictly positive), known as the scattering coefficient,
is spatially dependent. In this paper we study the case that the
scattering coefficient is highly oscillatory, with length scale
indicated by $\delta \ll 1$: for instance
$\sigma^{\delta}(x) = \sigma(x/\delta)$ with $\sigma$ periodic. The
dimensionless parameter $\varepsilon$ in the equation, known as the
Knudsen number, characterizes the ratio between the mean-free path of
the particle with the macroscopic length scale. Thus a smaller
$\varepsilon$ indicates stronger interaction between particles and the
media.

The specific form of the collision operator $\mathcal{L} f$ depends on the detailed modeling of the interaction of the particles with the media, but in general, it satisfies the following properties, as for the cases of radiative transfer equations and neutron transport equations:
\begin{itemize}
\item[(1)] The null space of $\mathcal{L}$ has dimension $1$. We
  denote $\text{Null}\mathcal{L} = \text{span}\{\mathcal{F}(v)\}$ with
  $\mc{F}$, normalized, referred as the Maxwellian, which is the equilibrium state
  of the collision operator;
\item[(2)] Boundedness: $\| \mathcal{L} \|_{L^2_{\mathcal{F}^{-1}}} \leq 1 $;
\item[(3)] Dissipativeness: $\exists\, c_0 > 0$ such that for any $f$,
  $\int_V f \mathcal{L} (f) \frac{1}{\mathcal{F}} \rd{v} \leq
  -\frac{c_0}{2}\lVert f - \rho \mathcal{F} \rVert_{L^2(\mathcal{F}^{-1})}^2
   \leq 0$,
  where $\rho = \int_V f\mathcal{F}\rd{v}$;
\item[(4)] Boundedness of the generalized inverse $\mathcal{L}^{-1}$: $\exists\, C_0 \geq 0$ such that 
  $\lVert \mathcal{L}^{-1}(h) \rVert_{L^2_{\mathcal{F}^{-1}}} \leq
  C_{0} \lVert h \rVert_{L^2_{\mathcal{F}^{-1}}}$ for all $h\perp\NullL$.
\end{itemize}
For the ease of the presentation, we in this paper study the simplest
case:
$$\mathcal{L} f = \langle f\rangle_v - f\,,$$
where $\langle\cdot\rangle_v$ stands for average with respect to the
$v$ variable (or if $\xi$ is used, the average is then taken over the
$\xi$ variable). Physically, this represents that after the collision,
the velocity of the particle becomes uniformly random in the velocity
space. It is clear that $\mc{F}$, the Maxwellian, is a constant
function in velocity variable in this case.

The transport equation with multiscale scattering coefficients
involves two small parameters: $\varepsilon$, the Knudsen number,
small in the fluid regime, which restricts the time step size of the
discretization, and $\delta$, the oscillation parameter of the
scattering coefficients, that typically requires fine spatial
discretization. Our goal is to design an algorithm that overcomes the
restrictions on discretization and captures the correct asymptotic
limit for both parameters. It will turn out that capturing the correct
asymptotic limit in zero limit of $\varepsilon$ is aligned with
designing asymptotic preserving (AP) scheme, while recovering the
correct limit in the zero limit of $\delta$ is connected to  numerical
homogenization.

\subsection{Diffusion limit of transport equations}
If we start with the  equation:
\begin{equation}
  \partial_t f + v\cdot\nabla_x f = \sigma^{\delta} \mathcal{L}f\,, 
\end{equation}
and perform the parabolic scaling, which is to set
\begin{equation}\label{eqn:scaling}
  t \to \frac{t}{\varepsilon^2}\,,\quad x\to\frac{x}{\varepsilon}\,, 
\end{equation}
we obtain \eqref{eqn} after the non-dimensionalization. It is well
known that in the zero limit of $\varepsilon$, the distribution
function stabilizes and converges to the Maxwellian, in the kernel of
the collision operator. Since in our case, the kernel of
$\mc{L}$ consists of constant functions in $v$, we could set
$f\to\rho^{\delta}(t,x)\mc{F}(\xi)$. With the standard Hilbert
expansion technique, it could be shown rigorously that $\rho^{\delta}$
solves the heat equation:
\begin{equation}\label{eqn:rho}
\partial_t\rho^{\delta} - C\nabla_x\cdot\left(\frac{1}{\sigma^{\delta}}\nabla_x\rho^{\delta}\right) = 0\,,
\end{equation}
with $C$ depending on collision kernel and the dimension of the
velocity space. In the case of isotropic collision operator with 2D
velocity space we use below, $C$ is given by $\frac{1}{2}$.  The heat
equation, therefore is termed the diffusion limit of the transport
equation. Such limit of the transport equation has been known for a
long time, and rigorously proved in \cite{Papa75} for Cauchy problem
and in \cite{BSS84} for bounded domain with well-prepared boundary and
initial data.

Capturing such asymptotic limit in numerical discretization is not
trivial. The small Knudsen number appears in front of the transport
and the collision operator, making the two terms stiff. In
computation, in order to capture accurate solution when stiff terms
present, the standard approaches would require a refined time discretization step size: $\Delta t<\varepsilon$, which leads tremendous
computational cost.

The so-called asymptotic preserving (AP) is a property of a numerical
method that is able to capture the asymptotic limit with the
discretization not refining the small scales of the problem. The
framework is designed for all types of discretization, but up to now
most progress has been limited to the time domain treatment. Spatial
domain discretization requires intricate boundary layer and interface
analysis, and only limited studies have been carried
out~\cite{LiLuSun2015,LiLuSun2015JCP,GK:10}. To relax the time
discretization requirements, the focus has been placed on obtaining
uniform stability for all CFL number. Most AP schemes that have been
designed exploit implicit treatment that enlarges the stability
region. The first such type of scheme appeared in~\cite{LMM} for the
transport equation computation and was later on summarized and defined
by Jin in~\cite{Jin99}. A vast literature followed the line and were
devoted to design AP schemes for varies kinetic equations, and for the
Boltzmann equation specifically, BGK penalization~\cite{FJ10},
exponential Wild sum~\cite{Wild,expo2,DP} and micro-macro
decomposition~\cite{Klar,BLM_MM} are the three major strategies. The
underlying idea of them all is to find solvers that employ implicit
treatment at the cost of explicit discretization. For transport
equation we refer to~\cite{LiWang} where a preconditioner is designed
for the implicit scheme to accelerate the convergence of the iterative
solution. For further discussion on asymptotic preserving schemes, we
refer to reviews~\cite{JinReview,pareschi_review} for Boltzmann
equation,~\cite{Degond-Rev13} for plasma and~\cite{HuJinLi16} for
hyperbolic type equations in general.

In this paper, to get over the difficulty placed on time domain
discretization, we follow the idea proposed in~\cite{JPT2} and utilize
the even-odd decomposition with implicit treatments. The details are
given in Section~\ref{sec:evenodd}.

\subsection{Heterogeneous media with high oscillations}
Albeit the long history of deriving the diffusion limit for the
transport equation with smooth media, the asymptotic limit in the case
of heterogeneous media is much less understood. The usual diffusion
limit requires smoothness of the scattering coefficient
$\sigma^{\delta}$, which might not hold for the highly oscillatory
media in our case.  The resonances between $\varepsilon$ and $\delta$
may lead to intricate phenomena, and depending on the scaling between
$\delta$ and $\varepsilon$, different types of limit could be
obtained. In the steady case (without $\partial_t$ term), the authors
in~\cite{AllaireBal:99} studied the spectrum of the steady state for
$\delta = \varepsilon$, and in the evolution case, Dumas and Golse
derived in~\cite{DumasGolse:00} the homogenized limit for the
transport equation with $\delta \ll 1$ but $\varepsilon = 1$; Goudon
and Mellet focused on combining the homogenization limit and the
diffusion limit with $\delta = \varepsilon\ll 1$
in~\cite{GoudonMellet:01, GoudonMellet:03}. A recent paper by Ben
Addullah etc.~\cite{AbPV:12} studied the periodic oscillatory media
for transport equation with $\delta\ll\varepsilon$, in which case a
drift-diffusion limit was obtained.

Despite the results on the analytic level, either formal or rigorous,
the corresponding numerics has barely been touched. The small
oscillatory factor $\delta$ in the media produces fast oscillations in
the solution along spatial domain, and without special treatment, the
brute-force numerical algorithm requires $\Delta x<\delta$. Compared
with the difficulty brought by the small $\varepsilon$, this
difficulty is even more severe: since a small spatial discretization
may impose further restrictions of the time step size; and at the
same time increases the memory cost of the numerical computation.

It is natural to consider borrowing ideas from numerical
homogenization for elliptic and parabolic type equations,
where the focus has been put on capturing the correct homogenization limit
as $\delta \to 0$, with the spatial discretization not resolving the
fine spatial scale. During the past two decades, mainly for elliptic/parabolic type of
equations, numerical analysts have developed a variety of schemes
achieving such goal from several aspects. Many successful algorithms
are designed, including multiscale finite element method~\cite{HW97,HWC99,EHW:00},
heterogeneous multiscale
method~\cite{EEngquist:03,EMingZhang:2005,MY06,AEEV12}, proper
orthogonal decomposition~\cite{MP14}, and harmonic
mapping~\cite{OZ07}. Related to our situation, many such works are
based on constructing localized basis
functions~\cite{BL11,Gloria06,OZ14,GE10} that captures the oscillation
of the media. The detailed algorithm vary but the main idea behind
them all is to upscale the problem and explore the low rank structure
in the solution space. 

Similar methods have not been carried out for the transport equation
to the best of our knowledge. Finite difference (the so-called $S_N$, the discrete ordinate method)
and spectral method (the so-called $P_N$) are standard for velocity
domain discretization and along spatial domain, finite volume or
upwind discrete Galerkin~\cite{GK:10,JangLiQiuXiong} is mostly used. It
is obvious that these methods, if used, require $\Delta x<\delta$. To
overcome such difficulty, the basis construction techniques from
numerical homogenization need to be employed. In this paper we look
for getting better basis functions for expanding the solution space
that have the information from the oscillatory media embedded in.

\subsection{Contributions of the current work}
In this work, we will focus on the case $\veps \ll \delta \ll 1$ with
$\varepsilon$ and $\delta$ unrelated (but both small). We leave the
study of other regimes (e.g., $\delta \ll \veps \ll 1$ or
$\veps = C \delta \ll 1$ for some constant $C$) for future works.

Our goal is to design a fast and accurate numerical scheme for the
transport equation with highly oscillatory media in the diffusion
regime. The difficulty is two-fold: the time discretization
restriction from the Knudsen number $\varepsilon$, and the space
discretization restriction from the oscillatory factor in the media
$\delta$. Our aim is to design a numerical scheme that behaves well in
both the highly oscillatory and the fluid regimes. More precisely, a
desirable algorithm would
\begin{itemize}
\item[1.] capture the diffusion limit with fixed discretization in the zero limit of the Knudsen number;
\item[2.] relax the discretization from the oscillation indicator $\delta$ while maintaining the macroscopic quantities.
\end{itemize}

We will follow the principles of AP and numerical homogenization, that
is to look for implicit solvers and apply upscaled basis
functions. However, a straightforward combination is not sufficient to
capture the limiting regime when both $\veps$ and $\delta$ are small.
If we simply use the basis obtained from numerical homogenization, in
the limit $\varepsilon\to 0$, the scheme does not converge to that of
the homogenized heat equation.

Considering the fact that the even and odd parts of the solution
play different roles in the diffusion limit, we treat them differently in the Galerkin projection to incorporate
the scattering coefficient $\sigma^{\delta}$. As will be shown later, such special and
dedicate treatment is the key that allows one to preserve the correct
discretization in the limiting heat equation regime, and it is the
main contribution of the current paper.

In the following, we describe our numerical method in Section 2, and
the convergence of the scheme is analyzed in Section 3. In Section 4
we conclude with some numerical examples.

\section{Numerical method}
We prescribe the algorithm in this section. To be more specific, we
discuss the numerical method for the problem in two spatial dimension
with particles traveling at the same speed so only the direction of
the velocity differs. Problem in other dimensions could be treated
similarly. We write the radiative transfer equation as
\begin{align}\label{eqn:radiative_transfer}
\varepsilon a^{\delta} \partial_t f +  a^{\delta} \cos{\xi}\partial_xf +  a^{\delta} \sin{\xi}\partial_yf = \frac{1}{\varepsilon}\mathcal{L}f\,,&\quad (\xx,\xi) = (x,y,\xi)\in\Omega\otimes(-\pi,\pi]\,.
\end{align}
where $a^{\delta} = \frac{1}{\sigma^{\delta}} = a(\frac{\xx}{\delta})$
is the inverse scattering coefficient, which is periodic with period
$\delta$. We use $a^{\delta}$ instead of $\sigma^{\delta}$ so that
the resulting diffusion limit takes the usual form of heat equation
with oscillatory coefficient, as will be shown below. We have assumed
periodic coefficient $a^{\delta} = a(\frac{\xx}{\delta})$; it is
straightforward to extend to two scale coefficients
$a^{\delta} = a(\xx, \frac{\xx}{\delta})$ where $a(\xx, \cdot)$ is
periodic. For simplicity we take the collision operator
\begin{equation}
\mathcal{L}f = \frac{1}{2\pi}\int f\rd{\xi} - f\,.
\end{equation}
The velocity domain is represented using the angle $\xi\in(-\pi,\pi]$:
$v = (\cos{\xi},\sin{\xi})$ gives the velocity. For arbitrary small
but fixed $\delta$, in the zero limit of $\varepsilon$ (recall that we
consider the regime $\veps \ll \delta \ll 1$ in this work), the
transport equation converges to the following heat equation with
highly oscillatory diffusion coefficient:
\begin{equation}\label{eqn:limit_heat}
\partial_t\rho^\delta = \frac{1}{2}\nabla_{\xx}\cdot\left( a^{\delta} \nabla_\xx\rho^\delta\right)\,,
\end{equation}
where $f(t,\xx,\xi)\to\rho^\delta(t,\xx)$ as $\varepsilon\to 0$,
having its velocity dependence vanishing in the diffusion limit. The
solution is still highly oscillatory in $\xx$ due to the heterogeneity
in space.

Sending $\delta\to 0$, we will obtain the homogenized limiting heat
equation. As $\delta\to 0$, $\rho^{\delta}\to\rho$ where $\rho$ solves
the homogenized heat equation with a smooth media:
\begin{equation}\label{eqn:limit_hom_heat}
\partial_t\rho = \frac{1}{2}\nabla_\xx\cdot\left( a_{\hom}\nabla_\xx\rho\right)\,.
\end{equation}
Here $a_{\hom}$ is the homogenized coefficient, which could be obtained by solving the cell problem \cite{BLP}
\begin{equation}
  \ee \cdot a_{\hom}   \ee = \inf_{\chi_{\ee}}  \int_{\Gamma} a(y) \bigl\lvert \nabla \chi_{\ee}(y) + \ee \bigr\rvert^2 \ud y, \qquad \forall \lvert \ee \rvert = 1, \ee \in \mathbb{R}^2.
\end{equation}
where $\Gamma$ is the unit cell of the periodic coefficient $a$. The
homogenized coefficient $a_{\hom}$ is a $2\times 2$ matrix and in
general is not isotropic. 

We seek for an algorithm that is accurate across regimes with the discretization independent on the external parameters such as $\varepsilon$ and $\delta$. More precisely, we look for a numerical scheme that captures accurate numerical solutions both in the kinetic regime with $\varepsilon = \mathcal{O}(1)$, and in the fluid regime with $\varepsilon \to 0$; with either smooth media where $\delta = \mathcal{O}(1)$ or highly oscillatory media with $\delta \to 0$.

Under the Galerkin framework, we construct some basis functions first
and then project the original equation~\eqref{eqn:radiative_transfer}
onto the finite dimensional space expanded by them. The convergence
will simply be governed by the effectiveness of the basis
functions. However, it turns out directly performing the projection is
not going to maintain the AP property, and a reformulation is
needed. Below we first describe the even-odd reformulation of the
equation, and the associated discretization. It will be followed by
the basis construction in subsections~\ref{sec:basis_xi}
and~\ref{sec:basis_x}.

\subsection{Reformulation via even-odd
  decomposition}\label{sec:evenodd}
The even-odd decomposition for the transport equation has been used
for obtaining AP property by many studies,
see~\cite{Klar,JPT2}. It turns out also useful in our context to
  capture simultaneously the diffusion and homogenization limits. Let
  us define the even and the odd part of the solution:
\begin{equation}
f^\text{E} = \frac{1}{2}\left[ f(t,x,\xi) + f(t,x,-\xi)\right]\,,\quad f^\text{O} = \frac{1}{2}\left[ f(t,x,\xi) - f(t,x,-\xi)\right]\,.
\end{equation}
It is obvious that $f = f^\text{E} + f^\text{O}$. With such decomposition we reformulate the equation~\eqref{eqn:radiative_transfer} as:
\begin{equation}\label{eqn:even_odd}
\begin{cases}
  \text{Even:}&\quad \displaystyle
   a^\delta\partial_t f^{\tE} + \frac{ a^\delta}{\varepsilon}v\cdot\nabla_x\fO = \frac{1}{\varepsilon^2}\left(\langle\fE\rangle_{\xi}-\fE\right); \\[1em]
  \displaystyle \text{Odd:}&\quad \displaystyle  a^\delta\partial_t \fO +
  \frac{ a^\delta}{\varepsilon}v\cdot\nabla_x\fE =
  -\frac{1}{\varepsilon^2}\fO.
\end{cases}
\end{equation}
Note in particular that the average in collision operator 
acting on the odd function gives $\langle \fO \rangle_{\xi} = 0$.

To ensure the asymptotic preserving property, implicit treatment has
to be applied on stiff terms, and here we will treat both the
convection and the reaction terms implicitly. Thus the resulting
  scheme is fully implicit. Taking backward Euler for example, given
the value of $f^{\tE,n}$ and $f^{\tO,n}$ at time step $t_n$, we solve
for the functions at the new time step by
\begin{equation}\label{eqn:even_odd_dis}
\begin{aligned}
\text{Even:}\quad && a^\delta f^{\tE,n+1} + \frac{\Delta t}{\varepsilon^2} f^{\tE,n+1} - \frac{\Delta t}{\varepsilon^2}\langle f^{\tE,n+1}\rangle_{\xi} &=  a^\delta f^{\tE,n}-\frac{\Delta t}{\varepsilon} a^\delta v\cdot\nabla_\xx f^{\tO,n+1}\,,\\
\text{Odd:}\quad && a^\delta f^{\tO,n+1} + \frac{\Delta t}{\varepsilon^2} f^{\tO,n+1} &=  a^\delta f^{\tO,n}-\frac{\Delta t}{\varepsilon} a^\delta v\cdot\nabla_\xx f^{\tE,n+1}\,.
\end{aligned}
\end{equation}
Here $\Delta t$ is the time step size.

To turn the above semi-discrete equation into a fully discretized
system, we now employ discretization in spatial and velocity domain.
Under the general Galerkin framework, we expand the solutions with
pre-constructed basis functions:
\begin{equation}\label{eqn:even_odd_notation}
f^\tE \sim f^E_{M,N} = \sum_{m,n=1}^{M,N} \alpha_{mn}\phi_m(\xx)p_n(\xi)\,,\quad f^\tO \sim f^O_{M,N} = \sum_{m,n=1}^{M,N} \beta_{mn}\phi_m(\xx)p_n(\xi)\,,
\end{equation}
Here we choose $M$ basis functions $\{\phi_m(\xx)\}$ in spatial domain and $N$ basis functions $\{p_n(\xi)\}$ in velocity space respectively. 

To update $\alpha_{mn}$ and $\beta_{mn}$, we substitute the ansatz
into~\eqref{eqn:even_odd_dis} and project the equation onto the finite
dimensional space
$\Span\{\phi_m(\xx)p_n(\xi), m = 1, \ldots, M, n = 1, \ldots, N\}$.
\textit{In fact the projection is not unique as we may change the form
  of the equations \eqref{eqn:even_odd_dis} before the projection.}
For consistency with the asymptotic limit, we divide the even equation
with $a^\delta$ before the Galerkin projection while keeping the form
of the odd equation in the projection. We emphasize that for the
  spatial and velocity discretization, the even and odd equations are
  treated differently. This is crucial for the scheme to capture both
  diffusion and homogenization limits, as will be shown below; see
  also Remark~\ref{rmk:asym_formulation}.

In a concise matrix form, we arrive at the discrete system 
\begin{equation}\label{eqn:Galerkin_E}
\left\{\Phi\otimes\II +\frac{\Delta t}{\varepsilon^2} \Sigma^\text{inv}\otimes(\II-\PP)\right\}\cdot\vec{\alpha}^{n+1}=\left(\Phi\otimes\II\right)\cdot\vec{\alpha}^n-\left\{\frac{\Delta t}{\varepsilon}\Xi^x\otimes\FF^{\cos}+\frac{\Delta t}{\varepsilon}\Xi^y\otimes\FF^{\sin}\right\}\cdot\vec{\beta}^{n+1}
\end{equation}
for the even function and
\begin{equation}\label{eqn:Galerkin_O}
\left\{\Sigma\otimes\II +\frac{\Delta t}{\varepsilon^2}\Phi\otimes\II\right\}\cdot\vec{\beta}^{n+1}=\left(\Sigma\otimes\II\right)\cdot\vec{\beta}^n-\left\{\frac{\Delta t}{\varepsilon}\Sigma^x\otimes\FF^{\cos}+\frac{\Delta t}{\varepsilon}\Sigma^y\otimes\FF^{\sin}\right\}\cdot\vec{\alpha}^{n+1}
\end{equation}
for the odd function. In this formulation we have used various mass
and stiffness matrices that given by:
\begin{align}\label{eqn:assemble_matrices}
\Sigma_{mn} &= \langle a^{\delta} \phi_m\,,\phi_n\rangle_\xx\,,\quad &\Phi_{mn} &= \langle\phi_m\,,\phi_n\rangle_\xx\,,\\
\Sigma^x_{mn} &= \langle\phi_m\,,a^{\delta} \partial_x\phi_n\rangle_\xx\,,\quad& \Sigma^y_{mn} &= \langle\phi_m\,,a^{\delta} \partial_y\phi_n\rangle_\xx\,,\nonumber\\
\Xi^x_{mn} &= \langle\phi_m\,,\partial_x\phi_n\rangle_\xx\,,\quad& \Xi^y_{mn} &= \langle\phi_m\,,\partial_y\phi_n\rangle_\xx\,,\nonumber\\
\Sigma^\text{inv}_{mn} &= \langle (a^\delta)^{-1}\phi_m\,,\phi_n\rangle_\xx\,,\nonumber
\end{align}
on the spatial domain and
\begin{align}\label{eqn:assemble_xi}
\II_{mn} & = \langle p_m\,,p_n\rangle_\xi\,,\quad & \BB_{mn} &= -\langle \mathcal{L}p_m\,,p_n\rangle_\xi\\
\FF^{\cos}_{mn} & = \langle \cos{\xi}\,p_m\,,p_n\rangle_\xi\,,\quad & \FF^{\sin}_{mn} &= \langle \sin{\xi}\,p_m\,,p_n\rangle_\xi\,,\nonumber\\
\PP_{mn} & = \langle p_m \rangle_{\xi} \langle p_n\rangle_\xi\,\nonumber
\end{align}
on the velocity domain. The coefficients $\alpha$ and $\beta$ needed to be ordered in a consistent way:
\begin{equation*}
\vec{\alpha} = [\alpha_{11}\,,\alpha_{12}\,,\cdots\,,\alpha_{1N}\,,\alpha_{21}\,,\cdots\,,\alpha_{2N}\,,\cdots,\alpha_{MN}]^T\,,\quad \vec{\beta} = [\beta_{11}\,,\beta_{12}\,,\cdots\,,\beta_{1N}\,,\cdots,\beta_{MN}]^T\,.
\end{equation*}

The basis functions along the velocity domain determine the structure
of $\II$, $\BB$, $\PP$ and the two flux terms $\FF^{\cos}$ and
$\FF^{\sin}$, while $\Sigma$, $\Phi$, $\Sigma^\text{inv}$ and the
four flux terms $\Sigma^x$, $\Sigma^y$, $\Xi^x$ and $\Xi^y$ are
determined by the basis construction along the spatial domain. To avoid confusion we use Greek letters for $\xx$ and Latin letters for $\xi$. Note that the flux matrices are in general not symmetric or anti-symmetric, due to the presence of $a^{\delta}$.

\begin{remark}\label{rmk:asym_formulation}
If we keep the form of the even equation in the projection, we will get  the following updating formula instead (cf. \eqref{eqn:Galerkin_E}):
\begin{equation}\label{eqn:Galerkin_asym_E}
\left\{\Sigma\otimes\II +\frac{\Delta t}{\varepsilon^2}\Phi\otimes(\II-\PP)\right\}\cdot\vec{\alpha}^{n+1}=\left(\Sigma\otimes\II\right)\cdot\vec{\alpha}^n-\left\{\frac{\Delta t}{\varepsilon}\Sigma^x\otimes\FF^{\cos}+\frac{\Delta t}{\varepsilon}\Sigma^y\otimes\FF^{\sin}\right\}\cdot\vec{\beta}^{n+1}\,.
\end{equation}
In terms of computation this formulation might be easier
than~\eqref{eqn:Galerkin_E} since we save the computation of three
more terms: $\Xi^x$, $\Xi^y$ and $\Sigma^\text{inv}$. However, as will
be seen in Section~\ref{sec:convergence}, such discretization fails to
capture the asymptotic limit, and also leads to an asymmetric discretization of the limiting heat equation. 
\end{remark}

In the following two subsections we briefly describe the basis function construction along $\xx$ and $\xi$ respectively, and the associated numerical integration called for in evaluating the coefficients in~\eqref{eqn:assemble_matrices} and~\eqref{eqn:assemble_xi}. These basis need to be constructed such that: (1) the corresponding matrices enjoy simple structure, and (2) the high oscillation in the scattering coefficient is captured.

\subsection{Basis functions in $\xi$}\label{sec:basis_xi}

To construct basis functions along the velocity space, we use the standard Pn method. This is a well-accepted method for kinetic type of equations, especially for radiative transfer equations.

In short, Pn method uses the Legendre polynomials as basis
functions. They are a set of orthogonal polynomials in a bounded
domain with uniform weight functions:
\begin{equation}
\frac{1}{2\pi}\int_{-\pi}^{\pi} p_n(\xi)p_m(\xi)\rd{\xi} = \delta_{mn}\,.
\end{equation}
Here $p_n$ is a normalized $(n-1)$-th order polynomial in $\xi$, and they
are orthogonal with respect to each other. Some advantages of the
method are immediate. The set simultaneously diagonalizes two
operators in the equation: both $\II$ and $\BB$ are diagonal
matrices. $\II$ being diagonal is easy to see due to the definition,
and $\BB$ is diagonal mainly due to the structure of the collision
operator. The Legendre polynomials are the eigenfunctions of
$\mathcal{L}$:
\begin{equation}
\mathcal{L}p_n(\xi)  = \lambda_np_n(\xi)\,,
\end{equation}
and for the collision term in equation~\eqref{eqn:radiative_transfer} specifically, we have:
\begin{equation}
\mathcal{L}p_n(\xi)  = \begin{cases}
0\,,\quad &n=1\,,\\
-p_n(\xi)\,,\quad &n\neq 1\,.
\end{cases}
\end{equation}
And thus:
\begin{equation}\label{eqn:BBPP}
\BB = \left(\begin{array}{cccc}0 & \cdots & \cdots & \cdots\\
0 & 1 & 0 & \cdots\\
\vdots & \ddots & \ddots & \vdots\\
0 & \cdots & 0 & 1
\end{array}\right)\,,\quad\PP = \left(\begin{array}{cccc}1 & 0 & \cdots & \cdots\\
0 & 0 & 0 & \cdots\\
\vdots & \ddots & \ddots & \vdots\\
0 & \cdots & 0 & 0
\end{array}\right)\,,
\end{equation}
meaning $\BB$ is an identify matrix except the $(1,1)$-entry is changed zero, and $\PP$ is a zero matrix except the $(1,1)$-entry is $1$. This prior knowledge saves us from performing numerical integration for assembling stiffness matrices. The flux terms, however, requires numerical integration.

In 1D, the form of the flux term could be further simplified.  It
reads as
\begin{equation}
\FF = \langle \xi p_n(\xi)\,,p_m(\xi)\rangle_\xi\,.
\end{equation}
According to the definition of the Legendre polynomial, the set satisfies the recurrence relation, and the flux matrix $\FF$ is a tridiagonal matrix.

In higher dimensions the flux terms no longer have such good structure and to precompute the flux terms $\FF^{\cos}$ and $\FF^{\sin}$, one needs to perform numerical integration. Here we utilize another hidden benefit of using orthogonal polynomial: the numerical integral is highly accurate with the Gaussian quadratures. Suppose we sample $K$ grid points on the velocity domain, the Gaussian quadratures are then the zeros for the $(K-1)$-th Legendre polynomials. We denote the sample points and the associated weights $\{\xi_k\,,w_k\}$ with $k=1,\cdots K$, then the integrations are computed as:
\begin{equation}\label{eqn:fluxes}
\FF^{\cos}_{mn} = \langle \cos{\xi}\, p_m(\xi)\,,p_n(\xi)\rangle_\xi \sim \sum_{k=1}^K\cos{\xi_k} p_m(\xi_k)p_n(\xi_k)w_k\,.
\end{equation}
The same computation holds true for $\FF^{\sin}$. This finishes our preparation on the velocity domain.

\subsection{Basis functions in $\xx$}\label{sec:basis_x}

For constructing basis functions along the spatial domain, we borrow
ideas from numerical homogenization to characterize the highly oscillatory media.

The numerical homogenization and upscaling has been studied thoroughly
for elliptic / parabolic type equations with highly oscillatory
heterogeneous
media. Among many techniques in numerical homogenization, we choose to use
the multiscale finite element method (MsFEM) to construct basis
functions. The possible adaptation of other techniques will be left to
future research.

The idea of MsFEM is to decompose the domain into nested grids, with the basis functions constructed on coarse mesh using fine grids. The basis functions, by construction, expand the null space of the elliptic operator patchwisely. As shown in~\eqref{eqn:limit_heat}, in the limiting regime, the elliptic operator for the diffusion equation is $\nabla_\xx\cdot(a_{\hom}\nabla_\xx\cdot)$. Following MsFEM, we construct a nested coarse-fine grids system with $D_H \in D_h$. Here $D_H = \{\xx_1\,,\cdots, \xx_N\}$ is the collection of coarse grid points with mesh size $H$ and $D_h$ collects all fine grid points with mesh size $h$. Typically $H$ is assumed not to resolve the fine scale $\delta$ but $h$, the fine mesh needs to. The subdomains are referred to the triangulations formed by the coarse grid points $D_H$, for example we could use $\mathcal{T}_H = \{D_H^k , 1 \leq k \leq K \}$ to denote the finite set of $D^k_H$, compact triangles or quadrilaterals constructed using coarse grid points. Here we assume there are $K$ subdomains in total, and the union of these $K$ subdomains covers the closure of the entire domain $D$. The intersection of different triangles or quadrilaterals is either empty, a common node or a common edge. Similarly we denote $\mathcal{T}_h$ the collections of the triangulation on the fine scale $\mathcal{T}_h = \{D_h^k , 1 \leq k \leq K_h \}$.

The essential idea is to precompute the Multi-scale Finite Element
Basis (MsFEB) in these subdomains using fine grid points, and assemble
the stiffness matrix with them. The Galerkin formulation is performed
on these basis functions that are associated with coarse mesh. Suppose
patch $D^m_H$ has $d$ nodal grid points, denoted as
$\xx_1, \ldots, \xx_d$, then in this patch, we construct $d$ basis
functions, with each one being associated with one nodal grid point:
\begin{equation*}
\begin{cases}
-\nabla_\xx\cdot(a^\delta \nabla_\xx\phi^l_m) = 0\,,\quad &\xx\in D^m_H\,,\\
\phi_m^{l}(\xx) = \psi_m^l(\xx)\,,& \xx\in\partial D^m_H\,,\quad l = 1,\cdots d\,.
\end{cases}
\end{equation*}
Here the boundary condition $\psi_m^l$ is set such that it sets $1$ at  grid point $\xx_l$ and $0$ for all others from the same patch:
\begin{equation}\label{eqn:boundary_MsFEM}
\psi_m^l(\xx_j) = \delta_{lj}\,, 
\end{equation}
and $\psi^l_m(\xx_j)$ is affine on the boundary, i.e., they are
  taken to be hat functions, restricted to the patch $D^m_H$.  As
seen in the formulation, the equation is computed in subdomain
$D^m_H$, with boundary conditions that the basis function ``picks up''
one nodal point of the patch. Obviously the basis functions $\phi^l_m$
computed here resembles the hat functions (restricted in a
  single patch) in the standard FEM that also takes value $1$ or $0$
at nodal grid points, but these basis functions have more details
embedded and thus the coarse mesh size $H$ does not need to resolve
the oscillation parameter $\delta$.

\begin{remark}
In the framework of MsFEM, other choices of the functions
    $\psi_m^l$ are possible: After we fix the nodal values as
    \eqref{eqn:boundary_MsFEM}, several possibilities exists for the
    choice of boundary values on the edge of the patch; such as the
    linear boundary conditions (which is what we used above)and the
    oscillatory boundary conditions (that is to compute the elliptic
    equation confined on the edges for the boundary values). Another
    choice recommended in~\cite{HW97} is to use the over-sampling
    technique: we first compute basis functions on an enlarged patch
    with linear boundary conditions, and then restrict the solutions
    obtained in the original smaller subdomain $D^k_H$. We have chosen
    the linear boundary conditions here for simplicity, while other
    choices are possible.
\end{remark}

Each nodal grid point, if not on the boundary, appear in multiple patches, and after the computation of all basis functions in all patches, for each nodal grid point $\xx_l$, we sum them up and obtain:
\begin{equation}\label{eq:defphil}
  \phi_l = \sum_{m: \, \xx_l \in D_H^m}\phi^l_m\,.
\end{equation}
They serve as the basis functions in the Galerkin formulation. Detailed construction could be found in the original paper~\cite{HW97}. With these basis functions constructed, we are ready to assemble the stiffness matrices in~\eqref{eqn:Galerkin_E} and~\eqref{eqn:Galerkin_O}. Here we need to compute $\Sigma$, $\Sigma^x$, $\Sigma^y$, $\Sigma^\text{inv}$, $\Xi^x$ and $\Xi^y$. To obtain the numerical integration, considering the basis functions are defined on fine grid points in local patches, we could simply use the very basic trapezoidal rule, for example:
\begin{equation}\label{eqn:Sigma}
\Sigma_{mn} = \langle a^\delta \phi_m\,,\phi_n\rangle_\xx\sim\sum_{k\in\mathcal{T}_h}\big|D^k_h\big|\left(a^\delta \phi_m\phi_n\right)\big|_{D^k_h}
\end{equation}

Once the basis functions are constructed and the stiffness matrices are assembled, there are no further use of the fine grid points and we could neglect them. This finishes our preparation in the spatial domain.

With the stiffness matrices computed in~\eqref{eqn:fluxes} and~\eqref{eqn:Sigma} we evolve equation~\eqref{eqn:Galerkin_E} and~\eqref{eqn:Galerkin_O} for the projection coefficients $\vec{\alpha}^{n+1}$ and $\vec{\beta}^{n+1}$.

\section{Convergence}\label{sec:convergence}
The success of the method lies in the two main ingredients. The
different treatments of the even and the odd equation shown
in~\eqref{eqn:Galerkin_E} and~\eqref{eqn:Galerkin_O}, and the
construction of the basis functions discussed in
subsection~\ref{sec:basis_x}. In this section we discuss the
properties of the scheme, mainly to show that it is asymptotic
preserving and captures the homogenized limit.  These two properties
combined ensures the convergence of the method with the discretization
$H$ and $\Delta t$ relaxed from both the two small scales
$\delta$ and $\varepsilon$ respectively.

To justify this numerical method we need to show the error
\begin{equation}\label{eqn:error}
\text{Error} = f - f^E_{M,N} - f^O_{M,N}
\end{equation}
is small where $f^E_{M,N}$ and $f^O_{M,N}$, defined
in~\eqref{eqn:even_odd_notation}, are determined by the coefficients
$\vec{\alpha}$ and $\vec{\beta}$ through
updating~\eqref{eqn:Galerkin_E} and~\eqref{eqn:Galerkin_O}.  Since the
result is trivial for $\varepsilon\sim\delta =\mathcal{O}(1)$, we only
focus on the case where parameters are small. As mentioned, we assume
the regime that $1\gg\delta \gg \varepsilon$.

Generally speaking it is not easy to control the error
in~\eqref{eqn:error}, especially given the undetermined role of the
two parameters in~\eqref{eqn:Galerkin_E} -\eqref{eqn:Galerkin_O}. As
seen before, the basis functions $\phi_m$ are constructed in a special
way such that the oscillation in the media gets embedded in: they are
constructed as $a$-harmonic function in each element. To estimate the
error, we will resort to the diffusion limit for which the basis
functions work well as in standard MsFEM method.

To this end, we first write the error term as:
\begin{equation}
\|f - f^E_{M,N} - f^O_{M,N}\| \leq \underbrace{\|f-\rho^{\delta}\|}_{\text{term I}} + \underbrace{\|\rho^{\delta} - \rho\|}_{\text{term II}}+ \underbrace{\|\rho-f^E_{M,N}-f^O_{M,N}\|}_{\text{term III}}\,. 
\end{equation}
where $\rho^{\delta}$ solves the diffusion limit~\eqref{eqn:limit_heat} and $\rho$ is the solution to the homogenized heat equation limit~\eqref{eqn:limit_hom_heat}. We summarize the three terms below and lay out the strategy for the proof. Without further notice, the norm of the error terms will always be choose as $L_2$ norm, either $L_2(\rd{\xx})$ or $L_2(\rd{\xx}\rd{v})$ depending on the contexts.
\begin{itemize}
\item Term I: it is the comparison between the solution to the
  transport equation with the diffusion limit.  For fixed $\delta$ and
  small $\varepsilon$, it is expected to be as small as
  $\mathcal{O}(\varepsilon)$ in the asymptotic limit. We cite in the next 
  theorem the classical result from~\cite{BSS84}.
\end{itemize}

\begin{theorem}\label{thm:diff_limit}
  Let $f$ and $\rho^{\delta}$ solve
  equation~\eqref{eqn:radiative_transfer} and~\eqref{eqn:limit_heat}
  respectively, with periodic boundary conditions. Assume the
  initial data for $f$ has no dependence on $v$, we have
  \begin{align*}
    \|f - \rho^{\delta}\|_{L_2(\rd{x}\rd{v})} = \mathcal{O}(\varepsilon)\,,
  \end{align*}
\end{theorem}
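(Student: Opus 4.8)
This is the classical diffusion-limit estimate, so the natural route is a Hilbert expansion combined with an $L^2$ energy estimate on the remainder. The plan is to build an approximate solution from the first few terms of the expansion, derive the equation satisfied by the difference between $f$ and this approximation, and then exploit the dissipativity property (3) of $\mathcal{L}$ to close an energy estimate uniformly in $\varepsilon$. Throughout I fix $\delta$ (so that $a^\delta$ is smooth and bounded above and below) and only send $\varepsilon \to 0$; all constants below are allowed to depend on $\delta$.

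First I would carry out the formal expansion $f = f_0 + \varepsilon f_1 + \varepsilon^2 f_2 + \cdots$ in \eqref{eqn:radiative_transfer}. Collecting powers of $\varepsilon$: the $\varepsilon^{-1}$ order forces $\mathcal{L}f_0 = 0$, so $f_0 = \rho^\delta(t,\xx)$ lies in $\NullL$ and is $v$-independent; the $\varepsilon^0$ order gives $\mathcal{L}f_1 = a^\delta\, v\cdot\nabla_\xx\rho^\delta$, and since the right side is mean-zero in $\xi$ and $\mathcal{L}^{-1}$ acts as $-\mathrm{Id}$ on mean-zero functions, $f_1 = -a^\delta\, v\cdot\nabla_\xx\rho^\delta$; the $\varepsilon^1$ order reads $\mathcal{L}f_2 = a^\delta\partial_t\rho^\delta + a^\delta\, v\cdot\nabla_\xx f_1$, whose solvability condition (vanishing $\xi$-average of the right-hand side) is precisely the limit equation \eqref{eqn:limit_heat}, using $\langle\cos^2\xi\rangle_\xi = \langle\sin^2\xi\rangle_\xi = \tfrac12$ and $\langle\cos\xi\sin\xi\rangle_\xi = 0$. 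This both pins down $\rho^\delta$ and supplies $f_2 = \mathcal{L}^{-1}(\cdots)$, which is well-defined and controllable by property (4) since for fixed $\delta$ the solution $\rho^\delta$ is as regular as needed.

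Next I would set $\tilde f = \rho^\delta + \varepsilon f_1 + \varepsilon^2 f_2$ and $g = f - \tilde f$, and compute the equation for $g$ by subtracting the expansion relations from \eqref{eqn:radiative_transfer}. By construction the $\varepsilon^{-1}$, $\varepsilon^0$, and $\varepsilon^1$ contributions cancel, leaving a source $S = \varepsilon a^\delta\partial_t g + a^\delta v\cdot\nabla_\xx g - \tfrac1\varepsilon\mathcal{L}g$ of size $\mathcal{O}(\varepsilon^2)$ in $L^2(\mathcal{F}^{-1})$. Testing the remainder equation against $g\,\mathcal{F}^{-1}$ and integrating over $(\xx,\xi)$ gives $\tfrac{\varepsilon}{2}\tfrac{d}{dt}\lVert g\rVert_{a^\delta}^2 = \tfrac12\int (v\cdot\nabla_\xx a^\delta)\,g^2 - \tfrac1\varepsilon\lVert g - \langle g\rangle_\xi\rVert^2 + \int S\,g\,\mathcal{F}^{-1}$, where the collision term is evaluated via property (3) and the transport term integrated by parts using the periodic boundary conditions. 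The crucial observation is that splitting $g = \langle g\rangle_\xi + g^\perp$ kills the dangerous macroscopic contribution of $\int (v\cdot\nabla_\xx a^\delta)g^2$, because $\langle v\rangle_\xi = 0$; the surviving cross and microscopic terms carry a factor of $g^\perp$ and, after Young's inequality, are absorbed into the $-\tfrac{1}{\varepsilon^2}\lVert g^\perp\rVert^2$ dissipation. With $\lVert S\rVert = \mathcal{O}(\varepsilon^2)$, the source contributes $\tfrac{2}{\varepsilon}\int S g \le C\lVert g\rVert^2 + C\varepsilon^2$, so a Grönwall argument yields $\lVert g(t)\rVert^2 \le e^{Ct}\bigl(\lVert g(0)\rVert^2 + C\varepsilon^2\bigr)$. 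Since the initial data is $v$-independent, $g(0) = -\varepsilon f_1(0) - \varepsilon^2 f_2(0) = \mathcal{O}(\varepsilon)$, so $\lVert g\rVert = \mathcal{O}(\varepsilon)$, whence $\lVert f - \rho^\delta\rVert \le \lVert g\rVert + \varepsilon\lVert f_1\rVert + \varepsilon^2\lVert f_2\rVert = \mathcal{O}(\varepsilon)$.

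The main obstacle is exactly this uniform-in-$\varepsilon$ closure: a naive estimate tested against $f$ produces the term $\tfrac1\varepsilon\int(v\cdot\nabla_\xx a^\delta)g^2$, which at first sight blows up like $\tfrac{1}{\varepsilon}\lVert\nabla_\xx a^\delta\rVert_\infty\lVert g\rVert^2$, and only the micro-macro decomposition together with the orthogonality $\langle v\rangle_\xi = 0$ prevents this. Two structural hypotheses make the argument go through and deserve emphasis: the $v$-independent initial data is well-prepared, so $g(0) = \mathcal{O}(\varepsilon)$ and no initial layer appears, and the periodic boundary conditions remove any boundary layer, so the expansion is globally valid in $\xx$ without correctors. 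I would finally note that all constants depend on $\delta$ through $\lVert\nabla_\xx a^\delta\rVert_\infty \sim 1/\delta$ and through the regularity of $\rho^\delta$; this is harmless here since the theorem fixes $\delta$, but it signals that the estimate degenerates as $\delta \to 0$, which is precisely why a separate homogenization argument (term II) is needed in the sequel.
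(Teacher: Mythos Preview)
The paper does not actually prove this theorem: it is stated as a citation of the classical result from \cite{BSS84} (``We cite in the next theorem the classical result from~\cite{BSS84}''), with no argument given. Your proposal therefore goes well beyond what the paper does, supplying the standard Hilbert-expansion-plus-energy-estimate proof that underlies the cited reference.

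Your outline is essentially correct and is the expected route. A few small comments. First, there is a notational slip: you write ``a source $S = \varepsilon a^\delta\partial_t g + a^\delta v\cdot\nabla_\xx g - \tfrac1\varepsilon\mathcal{L}g$'', but what you mean is that $S$ is the residual obtained by plugging the truncated ansatz $\tilde f$ into the transport operator; the equation for $g$ then reads $\varepsilon a^\delta\partial_t g + a^\delta v\cdot\nabla_\xx g - \tfrac1\varepsilon\mathcal{L}g = -S$ with $\lVert S\rVert = \mathcal{O}(\varepsilon^2)$. Second, the closure step is exactly as you describe: after dividing by $\varepsilon$, the transport commutator produces $\tfrac{1}{\varepsilon}\int (v\cdot\nabla_\xx a^\delta)g^2$, whose purely macroscopic part vanishes by $\langle v\rangle_\xi = 0$, and the remaining pieces carrying $g^\perp$ are absorbed into the $\tfrac{1}{\varepsilon^2}\lVert g^\perp\rVert^2$ dissipation via Young's inequality with a small parameter. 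Your closing remark that the constants blow up like $\lVert\nabla_\xx a^\delta\rVert_\infty \sim 1/\delta$ is apt and is precisely why the paper treats the $\delta\to 0$ limit separately.
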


\begin{remark}
  The periodic boundary condition excludes the complexity of the
  boundary layer effect, which we will not address in this work. The
  requirement of initial data independent on $v$ also exclude the
  initial layer. When initial layer exists, due to the exponential
  decay, it induces an error of order
  $\mathcal{O}(e^{-t/\varepsilon^2})$.
\end{remark}

\begin{itemize}
\item Term II: it represents the homogenization error. With $\delta\to 0$, the standard homogenization theory of heat equation~\cite{BLP} indicates that the error here is of order $\mathcal{O}(\delta)$. 
\item Term III: this is the error coming from numerical
  discretization. Typical brute-force analysis would pessimistically
  give error bounds depending on $\frac{\Delta x}{\delta}$ or
  $\frac{\Delta t}{\varepsilon}$. As will be shown later this is not
  the case due to the special design of the scheme: we demonstrate
  that the method captures the numerical homogenization limit with
  fixed discretization, besides being asymptotic preserving.
    Theorem~\ref{thm:diff_limit_discrete} guarantees that this error
    can be bounded by $\mathcal{O}(\varepsilon + \sqrt{\delta} + \Delta t + H^2)$.
\end{itemize}

We summarize the result here first by collecting the estimates
  for the three error terms.

\begin{theorem}
  Let $f$ be the solution to the
  equation~\eqref{eqn:radiative_transfer} with initial data
  independent of $v$ and in $C^3$. Let $f^E_{M,N} + f^O_{M,N}$ be the numerical approximation computed
  through~\eqref{eqn:Galerkin_E} and~\eqref{eqn:Galerkin_O}. Given
  oscillatory but periodic scattering coefficient
  $\sigma(\xx) =\sigma(\xx/\delta)$, we have: 
  \begin{equation}
    \|f-f^E_{M,N} - f^O_{M,N}\|_{L_2(\rd{x}\rd{v})}\leq \mathcal{O}(\varepsilon +  \sqrt{\delta} + \Delta t + H^2)\,.
  \end{equation}
\end{theorem}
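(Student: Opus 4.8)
The plan is to prove the estimate by the triangle-inequality splitting already displayed above, bounding the three terms separately and then summing. Terms I and II are handled by citation. Term I, $\|f-\rho^{\delta}\|$, is precisely the content of Theorem~\ref{thm:diff_limit}: under the stated hypotheses (periodic boundary conditions, $v$-independent $C^3$ initial data, hence neither boundary nor initial layer) it is $\mathcal{O}(\varepsilon)$. Term II, $\|\rho^{\delta}-\rho\|$, is the classical periodic homogenization estimate for the heat equation~\cite{BLP}, of order $\mathcal{O}(\delta)$. Since $\delta\leq\sqrt{\delta}$ for $\delta<1$, Term II is absorbed into the $\sqrt{\delta}$ that appears in Term III, so all the real work lies in the numerical error Term III.

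For Term III, $\|\rho-f^E_{M,N}-f^O_{M,N}\|$, the strategy is to establish Theorem~\ref{thm:diff_limit_discrete}, namely that the fully discrete even--odd Galerkin scheme~\eqref{eqn:Galerkin_E}--\eqref{eqn:Galerkin_O} is asymptotic preserving and that its $\varepsilon\to0$ limit is a consistent MsFEM discretization of the homogenized heat equation~\eqref{eqn:limit_hom_heat}. First I would carry out a discrete Hilbert (Chapman--Enskog) expansion on the coefficient vectors, writing $\vec{\alpha}^{n}=\vec{\alpha}^{n}_0+\varepsilon\vec{\alpha}^{n}_1+\cdots$ and $\vec{\beta}^{n}=\varepsilon\vec{\beta}^{n}_1+\cdots$, and match powers of $\varepsilon$ in the discrete system. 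The $\mathcal{O}(\varepsilon^{-2})$ balance in the even equation forces $(\II-\PP)\vec{\alpha}^{n+1}_0=0$, so the leading even part is isotropic in $\xi$ and carries only the $p_1$-mode, which I identify with the discrete macroscopic density $\vec{\rho}_H^{\,n+1}$. The leading-order odd equation expresses $\vec{\beta}^{n+1}_1$ through the flux matrices acting on $\vec{\alpha}^{n+1}_0$; substituting this back into the even equation yields a closed update for $\vec{\rho}_H$.

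The hard part will be verifying that this limiting update is exactly the symmetric MsFEM stiffness system for $\partial_t\rho=\frac{1}{2}\nabla_\xx\cdot(a^{\delta}\nabla_\xx\rho)$, and this is where the asymmetric treatment of the two equations is essential. Because the even equation was divided by $a^{\delta}$ before projection, the limiting stiffness operator is assembled from $\Sigma^\text{inv}$ together with the $\Xi^x,\Xi^y$ fluxes in a way that reproduces the $a^{\delta}$-weighted Dirichlet form; the naive formulation of Remark~\ref{rmk:asym_formulation} instead produces an asymmetric, inconsistent operator. Making this algebraic identification explicit---including the use of the fact that the MsFEM basis functions are $a^{\delta}$-harmonic on each patch, so that their energy inner product recovers the correct effective coefficient---is the \emph{main obstacle}.

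Once the limiting scheme is recognized as MsFEM for the oscillatory heat equation, the remaining estimates are standard. MsFEM error analysis~\cite{HW97} supplies an $\mathcal{O}(H^2)$ approximation error for the smooth homogenized profile in $L_2$, together with an $\mathcal{O}(\sqrt{\delta})$ resonance/boundary-layer error associated with the linear boundary conditions; backward Euler contributes $\mathcal{O}(\Delta t)$; and the asymptotic-preserving estimate---that the finite-$\varepsilon$ discrete solution differs from its $\varepsilon\to0$ limit by $\mathcal{O}(\varepsilon)$ uniformly in $H$ and $\Delta t$---supplies the last $\mathcal{O}(\varepsilon)$. Collecting Terms I, II and III then gives the claimed bound $\mathcal{O}(\varepsilon+\sqrt{\delta}+\Delta t+H^2)$.
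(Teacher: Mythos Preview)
Your overall plan---triangle-inequality splitting into Terms I, II, III, citing Theorem~\ref{thm:diff_limit} and periodic homogenization for the first two, and doing a discrete asymptotic expansion for the third---matches the paper exactly. The discrete Hilbert expansion you sketch (leading even balance forces $(\II-\PP)\vec{\alpha}_0=0$, odd equation gives $\vec{\beta}$ in terms of $\vec{\alpha}_0$, substitute back) is also the paper's route.

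There is, however, a genuine gap in how you handle the limiting scheme. You assert that the $\varepsilon\to 0$ update is ``exactly the symmetric MsFEM stiffness system'' for $\partial_t\rho=\tfrac{1}{2}\nabla_\xx\cdot(a^{\delta}\nabla_\xx\rho)$, with stiffness assembled from $\Sigma^{\text{inv}}$ and $\Xi^x,\Xi^y$ reproducing the $a^\delta$-weighted Dirichlet form. This is not what emerges. Projecting the even equation by $\PP$ kills the $\Sigma^{\text{inv}}\otimes(\II-\PP)$ term entirely; what survives after substituting the odd relation is
\[
\Phi\cdot\vec{\alpha}^{\delta,n+1}_1=\Phi\cdot\vec{\alpha}^{\delta,n}_1+\tfrac{\Delta t}{2}\,\DD\cdot\vec{\alpha}^{\delta,n+1}_1,\qquad
\DD=\Xi^x\Phi^{-1}\Sigma^x+\Xi^y\Phi^{-1}\Sigma^y.
\]
This matrix $\DD$ is \emph{not} the MsFEM stiffness matrix $\AA_{nm}=-\langle a^\delta\nabla\phi_m,\nabla\phi_n\rangle_\xx$; the paper states explicitly that the limiting scheme is not even a Galerkin scheme, and $\DD_{\hom}\neq\AA_{\hom}$ in general. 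Consequently your plan to ``recognize the limiting scheme as MsFEM'' and then invoke standard MsFEM error analysis does not apply.

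The paper instead handles $\DD$ by first passing to the $\delta\to0$ limit (Lemma~\ref{lemma:limit_mat}: $\Sigma^x\to\Sigma^x_{\hom}$, $\Xi^x\to\Xi^x_{\hom}$, etc., each at rate $\sqrt{\delta}$, hence $\DD\to\DD_{\hom}$), and then showing by a direct interpolation/finite-difference argument (Lemma~\ref{lemma:1st_derivative} and Remark~\ref{rmk:derivatives}) that $\Phi_{\hom}^{-1}\Sigma^x_{\hom}$ approximates $(a_{\hom}\nabla)_x$ and $\Phi_{\hom}^{-1}\Xi^x_{\hom}$ approximates $-\partial_x$ to $\mathcal{O}(H^2)$, so that $\Phi_{\hom}^{-1}\DD_{\hom}$ is an $\mathcal{O}(H^2)$ approximation of $\nabla\cdot(a_{\hom}\nabla)$. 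The $a^\delta$-harmonicity of the MsFEM basis is used only to get the weak convergence $a^\delta\nabla\phi_m\rightharpoonup a_{\hom}\nabla\bar{\phi}_m$, not to identify $\DD$ with a Dirichlet form. You should replace the ``algebraic identification'' step with this two-stage argument.
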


This result could be improved in 1D, as seen in Remark~\ref{rmk:conv_1D}. For later convenience, we first study the discretization of $\rho$ and $\rho^\delta$. Following the philosophy of asymptotic preserving scheme, we characterize the limiting numerical scheme as $\veps \to 0$. The error analysis of the third term will follow from the approximation error of the limiting scheme to the homogenized heat equation.

\subsection{Multiscale finite element method for the heat equation}

Let us take a detour and recall the Galerkin approximation for the
heat equation using the multiscale finite element basis constructed
before. While our scheme does not converge to a standard MsFEM scheme,
it will be useful to compare with it. In MsFEM, we approximate the
solution to the heat equation as
\begin{equation}
\rho^{\delta} \sim\rho^\delta_{M} = \sum_{m=1}^M\eta_{m}\phi_m\,.
\end{equation}
In the Galerkin framework, we project \eqref{eqn:limit_heat} onto the finite dimensional space spanned by $\{\phi_m\}$, and the numerical scheme reads:
\begin{equation}\label{eqn:Galerkin_rho}
\Phi\cdot\partial_t\vec{\eta} - \frac{1}{2}\AA\cdot\vec{\eta} = 0\,,
\end{equation}
where $\vec{\eta} = [\eta_1\,,\cdots,\eta_M]^T$ and 
\begin{align}
\Phi_{nm} &= \langle\phi_m\,,\phi_n\rangle_\xx\,,\quad \nonumber\\
\AA_{nm} &= \langle \nabla_\xx\cdot( a^\delta\nabla_\xx\phi_m)\,,\phi_n\rangle_\xx = -\langle a^\delta\nabla_\xx\phi_m\,,\nabla_\xx\phi_n\rangle_\xx\,.\label{eqn:AA}
\end{align}
Note that the definition of $\Phi$ is the same as the mass matrix
defined in~\eqref{eqn:assemble_matrices}. The stiffness matrix $\AA$,
by definition, is symmetric. The equation~\eqref{eqn:Galerkin_rho}, as
a semi-discretization of the heat equation, provides the evolution of
$\vec{\eta}$, the projection coefficients for $\rho^\delta$.

For a full discretization, we suppose at time step $t_n$ we have $\rho^{\delta,  n}_M$ ready. The simplest method for updating for the new time $\rho^{\delta,n+1}_M$ equation~\eqref{eqn:Galerkin_rho} that avoids parabolic time step size restriction is the backward Euler method:
\begin{equation}\label{eqn:rho_dis}
\Phi\vec{\eta}^{n+1} = \Phi\vec{\eta}^{n} +\frac{\Delta t}{2}\AA\vec{\eta}^{n+1}\,.
\end{equation}
For updating~\eqref{eqn:rho_dis}, one needs to find a numerical solver that efficiently and accurately invert $\Phi - \frac{\Delta t}{2}\AA$.

We also discretize the homogenized heat limit~\eqref{eqn:limit_hom_heat}. Following the standard Galerkin formulation, we use the simplest finite elements, namely when it is confined in $l$-th patch, it satisfies:
\begin{equation*}
\begin{cases}
-\nabla_\xx\cdot(a_{\hom}\nabla_\xx\bar{\phi}^l_m) = 0\,,\quad &\xx\in D^m_H\,,\\
\bar{\phi}_m^{l}(\xx) = \psi_m^l(\xx)\,,& \xx\in\partial D^m_H\,,\quad l = 1,\cdots d\,.
\end{cases}
\end{equation*}
In 1D, they are simply the hat function. For simplicity of the
analysis, we assume that $\psi_m^l$ is linear on the boundary for
higher dimensional cases, so that $\bar{\phi}_l$ constructed similarly
as \eqref{eq:defphil} are also (higher dimensional) hat functions.
The following lemma is from the standard MsFEM analysis
\begin{lemma}\label{lemma:AAPhi}
Define the homogenized stiffness and mass matrices:
\begin{equation}
  (\AA_{\hom})_{nm} = -\langle a_{\hom}\nabla_\xx\bar{\phi}_m\,,\nabla_\xx\bar{\phi}_n\rangle_\xx\,,\quad (\Phi_{\hom})_{nm} = \langle \bar{\phi}_m\,,\bar{\phi}_n\rangle_\xx\,,
\end{equation}
we then have 
\begin{equation*}
\lvert (\AA_{\hom})_{nm} - \AA_{nm} \rvert = \mathcal{O}(\sqrt{\delta})\,,\quad \lvert (\Phi_{\hom})_{nm} - \Phi_{nm} \rvert = \mathcal{O}(\delta),
\end{equation*}
where $\AA$ and $\Phi$ are defined in \eqref{eqn:AA}.
\end{lemma}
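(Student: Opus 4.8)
The plan is to derive both estimates from classical periodic homogenization applied patch-by-patch, since the statement is precisely the assertion that the MsFEM stiffness and mass matrices are close to their homogenized (linear finite element) counterparts. Fix indices $n,m$. On each coarse element $D^k_H$ the basis function $\phi_m$ is $a^\delta$-harmonic with hat-function Dirichlet data $\psi_m$, while $\bar\phi_m$ is the $a_{\hom}$-harmonic (hence piecewise affine, since $a_{\hom}$ is constant) extension of the same data. The central difficulty is that although $\phi_m\to\bar\phi_m$ in $L^2$, the gradient $\nabla_\xx\phi_m$ does \emph{not} converge strongly to $\nabla_\xx\bar\phi_m$: it retains $\mathcal{O}(1)$ oscillations on the $\delta$-scale. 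Hence the stiffness entry cannot be controlled by any $H^1$ difference of the basis functions, and one must instead exploit the special flux structure of $a$-harmonic functions.

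For the stiffness matrix, write $\AA_{nm}=-\sum_k\int_{D^k_H}a^\delta\nabla_\xx\phi_m\cdot\nabla_\xx\phi_n\,\ud\xx$ and insert the two-scale expansion
\[
\phi_m(\xx)=\bar\phi_m(\xx)+\delta\,\chi(\xx/\delta)\cdot\nabla_\xx\bar\phi_m(\xx)+r^\delta_m(\xx),
\]
where $\chi=(\chi_1,\chi_2)$ are the periodic cell correctors defining $a_{\hom}$, and $r^\delta_m$ is the remainder. Because $\nabla_\xx\bar\phi_m$ is constant on each patch, substituting the leading gradient $\nabla_\xx\phi_m\approx(\Id+\nabla_y\chi(\xx/\delta))\nabla_\xx\bar\phi_m$ and averaging the fast variable reduces the patch integrand, component by component, to the cell average $\int_\Gamma a(y)(\ee_i+\nabla_y\chi_i)\cdot(\ee_j+\nabla_y\chi_j)\,\ud y$, which is exactly $(a_{\hom})_{ij}$ by the definition of the homogenized coefficient; this identifies the limit as $(\AA_{\hom})_{nm}$. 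Equivalently, one may argue qualitatively via the div--curl lemma, since $a^\delta\nabla_\xx\phi_m$ is divergence-free and $\nabla_\xx\phi_n$ is curl-free on each patch, so their product converges to the product of the weak limits $a_{\hom}\nabla_\xx\bar\phi_m$ and $\nabla_\xx\bar\phi_n$. The quantitative rate is governed by the remainder $r^\delta_m$: the corrector term $\delta\chi(\cdot/\delta)\cdot\nabla_\xx\bar\phi_m$ does not vanish on $\partial D^k_H$, and correcting this boundary mismatch forces a boundary layer of width $\mathcal{O}(\delta)$ and energy $\mathcal{O}(\sqrt{\delta})$, giving $\|r^\delta_m\|_{H^1(D^k_H)}=\mathcal{O}(\sqrt{\delta})$ and hence $|(\AA_{\hom})_{nm}-\AA_{nm}|=\mathcal{O}(\sqrt{\delta})$.

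The mass matrix is easier because it involves no gradients. Using the $L^2$ homogenization estimate $\|\phi_m-\bar\phi_m\|_{L^2(D^k_H)}=\mathcal{O}(\delta)$ (the corrector term is itself $\mathcal{O}(\delta)$ in $L^2$, so no boundary layer is seen at this order) together with the uniform bounds $\|\phi_m\|_{L^2},\|\bar\phi_m\|_{L^2}=\mathcal{O}(1)$, Cauchy--Schwarz gives
\[
|\Phi_{nm}-(\Phi_{\hom})_{nm}|\le\|\phi_m-\bar\phi_m\|_{L^2}\|\phi_n\|_{L^2}+\|\bar\phi_m\|_{L^2}\|\phi_n-\bar\phi_n\|_{L^2}=\mathcal{O}(\delta).
\]
The main obstacle, as flagged above, lies entirely in the stiffness estimate: it is the failure of strong gradient convergence, which is why the mass matrix attains the full $\mathcal{O}(\delta)$ rate while the stiffness matrix degrades to $\mathcal{O}(\sqrt{\delta})$ through the boundary-layer contribution. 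Both estimates are local to overlapping patches, so summing over the $\mathcal{O}(1)$ patches containing any given node preserves the stated rates.
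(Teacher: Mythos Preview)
Your proposal is correct and follows essentially the same route as the paper: both arguments invoke the standard corrector expansion $\phi_m \approx \bar\phi_m + \delta\,\chi(\xx/\delta)\cdot\nabla\bar\phi_m$ from periodic homogenization, cite the $H^1$ remainder bound of order $\sqrt{\delta}$ (with the degradation attributed to boundary layers), and use the weak flux convergence $a^\delta\nabla\phi_m \rightharpoonup a_{\hom}\nabla\bar\phi_m$ to identify the limit of the stiffness entries. Your write-up is in fact more explicit than the paper's (which simply quotes the estimate and says the rates ``follow easily''), particularly in flagging that strong gradient convergence fails and that the stiffness estimate therefore rests on the div--curl/flux structure rather than on a naive $H^1$ bound.
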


\begin{proof}
  We first recall from standard periodic homogenization (e.g.,
  \cite{BLP} or in the context of MsFEM \cite{HWC99}) of elliptic
  equations
  \begin{equation}\label{eq:convrate}
    \lVert \phi_m(\xx) -
    \bar{\phi}_m(\xx) - \delta \chi_x(\xx/\delta)
    \partial_x \bar{\phi}_m(\xx) - \delta \chi_y(\xx/\delta) 
    \partial_y \bar{\phi}_m(\xx) \rVert_{H^1(\rd x)} = \begin{cases}
      \mathcal{O}(\delta)\,,\quad &\text{1D}\\
      \mathcal{O}(\sqrt{\delta})\,,\quad & \text{higher D}
    \end{cases}\,,
  \end{equation}
  where $\chi_x$ and $\chi_y$ are correctors for the periodic
  homogenization. The lower rate of convergence in higher dimension is
  caused by boundary layers.  The limits
  $\AA_{\hom} = \lim_{\delta\to 0}\AA$ and
  $\Phi_{\hom} = \lim_{\delta\to 0}\Phi$ thus follow since
  $\phi_m\to\bar{\phi}_m$,
  $\nabla \phi_m \to  \nabla \bar{\phi}_m$ and
  $a \nabla \phi_m \rightharpoonup a_{\hom}\nabla\bar{\phi}_m$ as
  $\delta \to 0$. The convergence rate also follows from \eqref{eq:convrate} easily. 
\end{proof}

This naturally leads to the consistency of MsFEM to the
  homogenized heat equation. The proof is straightforward based on the
  previous Lemma, which we omit here.
\begin{proposition}\label{lemma:heat}
  As $\delta \to 0$, the multiscale finite element method for the
  equation~\eqref{eqn:limit_heat} converges to the following scheme:
  \begin{equation}\label{eq:scheme_hom_heat}
    \Phi_{\hom}\vec{\eta}^{n+1} = \Phi_{\hom}\vec{\eta}^{n} +\frac{\Delta t}{2}\AA_{\hom}\vec{\eta}^{n+1}\,.
  \end{equation}
  The limiting scheme is a consistent and stable discretization of the
  homogenized heat equation~\eqref{eqn:limit_hom_heat}.
\end{proposition}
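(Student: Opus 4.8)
The plan is to prove the two assertions separately: first that the multiscale scheme \eqref{eqn:rho_dis} converges entrywise to \eqref{eq:scheme_hom_heat} as $\delta \to 0$, and then that this limiting scheme is a consistent and stable discretization of \eqref{eqn:limit_hom_heat}. The first part is an immediate consequence of Lemma~\ref{lemma:AAPhi}. Since the matrices $\AA, \Phi$ and $\AA_{\hom}, \Phi_{\hom}$ all live in the fixed finite-dimensional space $\RR^{M\times M}$, the entrywise estimates $\lvert(\AA_{\hom})_{nm}-\AA_{nm}\rvert=\mathcal{O}(\sqrt{\delta})$ and $\lvert(\Phi_{\hom})_{nm}-\Phi_{nm}\rvert=\mathcal{O}(\delta)$ yield $\AA\to\AA_{\hom}$ and $\Phi\to\Phi_{\hom}$ in any matrix norm. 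Passing to the limit in the linear update \eqref{eqn:rho_dis} then produces \eqref{eq:scheme_hom_heat}, with no further work required.

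For the second part, the key observation is that \eqref{eq:scheme_hom_heat} is nothing but the standard $P_1$ (piecewise-linear hat function) Galerkin discretization in space, combined with backward Euler in time, applied to the homogenized equation \eqref{eqn:limit_hom_heat}. Indeed, the $\bar{\phi}_m$ are by construction the usual hat functions, and $\Phi_{\hom}, \AA_{\hom}$ are exactly the mass and stiffness matrices that the Galerkin projection of \eqref{eqn:limit_hom_heat} produces. Consistency then follows from classical finite element theory for parabolic problems with smooth (here constant) coefficients: the spatial truncation error is $\mathcal{O}(H^2)$ in $L^2$ (from $P_1$ interpolation, using that $a_{\hom}$ is constant and the exact solution is smooth) while the temporal truncation error is $\mathcal{O}(\Delta t)$ from backward Euler. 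I would invoke the standard C\'ea and interpolation estimates rather than rederive them.

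For stability, I would exploit the algebraic structure of the limiting matrices. The mass matrix $\Phi_{\hom}$ is symmetric positive definite, while the stiffness matrix $\AA_{\hom}$, defined by $(\AA_{\hom})_{nm}=-\langle a_{\hom}\nabla_\xx\bar{\phi}_m,\nabla_\xx\bar{\phi}_n\rangle_\xx$, is symmetric negative semi-definite because the homogenized tensor $a_{\hom}$ is symmetric positive definite. Consequently $\Phi_{\hom}-\tfrac{\Delta t}{2}\AA_{\hom}$ is symmetric positive definite, hence invertible for every $\Delta t>0$, so the update is well defined. Unconditional stability then follows from the usual backward Euler energy argument: testing \eqref{eq:scheme_hom_heat} against $\vec{\eta}^{n+1}$ and using the sign of $\AA_{\hom}$ together with Cauchy--Schwarz in the mass inner product gives $\lVert\vec{\eta}^{n+1}\rVert_{\Phi_{\hom}}\le\lVert\vec{\eta}^{n}\rVert_{\Phi_{\hom}}$.

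The only point requiring genuine input beyond bookkeeping is the positive-definiteness of the homogenized tensor $a_{\hom}$, which underlies both the invertibility of the update matrix and the stability estimate; this is where the ellipticity $a>0$ of the original coefficient enters, via the variational characterization of $a_{\hom}$ given by the cell problem. Everything else --- the matrix limit, the identification with standard FEM, and the consistency and stability estimates --- is routine once this ellipticity is in hand.
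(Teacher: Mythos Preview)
Your proposal is correct and aligns with the paper's intended argument: the paper itself omits the proof, stating only that it is ``straightforward based on the previous Lemma,'' i.e., Lemma~\ref{lemma:AAPhi}. Your first part (matrix convergence via Lemma~\ref{lemma:AAPhi}) is exactly what the paper has in mind, and your second part (identifying \eqref{eq:scheme_hom_heat} as the standard $P_1$ Galerkin plus backward Euler scheme for the constant-coefficient homogenized equation, then invoking classical FEM consistency and the energy stability argument) is the natural way to fill in what the paper leaves implicit. The identification of $\bar{\phi}_m$ with hat functions relies on the paper's explicit assumption that the boundary data $\psi_m^l$ are linear, which you correctly use.
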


\subsection{Term III: numerical homogenization and AP}
This subsection is devoted to showing the AP property, namely we would like to control the error between the transport equation numerical solution and the heat equation numerical solution, and the error should be independent of either $\varepsilon$ or $\delta$.

Before we turn to the limiting numerical scheme in the diffusion
  limit, let us characterize the limit of the coefficients in the
  following Lemma, which is analogous to Lemma~\ref{lemma:AAPhi} for
  MsFEM.

\begin{lemma}\label{lemma:limit_mat}
Define $\Xi_{\hom}^{x}$, $\Xi_{\hom}^y$, $\Sigma_{\hom}^x$ and $\Sigma_{\hom}^y$ the same way as defined in~\eqref{eqn:assemble_matrices} with $\phi_m$ replaced by $\bar{\phi}_m$, then
\begin{align}
\Xi_{\hom}^x = \lim_{\delta\to 0}\Xi^x\,,\quad\Xi_{\hom}^y = \lim_{\delta\to 0}\Xi^y\,,\\
\Sigma_{\hom}^x = \lim_{\delta\to 0}\Sigma^x\,,\quad\Sigma_{\hom}^y = \lim_{\delta\to 0}\Sigma^y\,.
\end{align}
Furthermore, let 
\begin{equation}\label{def:DD}
\DD = (\Xi^x\cdot\Phi^{-1}\cdot\Sigma^x)+(\Xi^y\cdot\Phi^{-1}\cdot\Sigma^y)\,, 
\end{equation}
then the limit $\DD_{\hom} = \lim_{\delta \to 0}\DD$ exists and is given by 
\begin{equation}\label{eqn:DD_hom}
  \DD_{\hom} = (\Xi_{\hom}^x\cdot\Phi_{\hom}^{-1}\cdot\Sigma_{\hom}^x)+(\Xi_{\hom}^y\cdot\Phi_{\hom}^{-1}\cdot\Sigma_{\hom}^y)\,.
\end{equation}
As $\delta \to 0$, we have 
\begin{equation}
  \lvert (\DD_{\hom})_{nm} - \DD_{nm} \rvert = \mathcal{O}(\sqrt{\delta}).
\end{equation}
\end{lemma}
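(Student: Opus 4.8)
The plan is to deduce the entire statement from the single corrector estimate \eqref{eq:convrate}, reusing the perturbation bound on $\Phi$ already contained in Lemma~\ref{lemma:AAPhi}. Since a quantitative entrywise rate automatically supplies the existence of each limit, I would not argue existence separately; instead I would prove the four bounds $\lvert\Xi^x_{mn}-(\Xi^x_{\hom})_{mn}\rvert,\ \lvert\Sigma^x_{mn}-(\Sigma^x_{\hom})_{mn}\rvert=\mathcal{O}(\sqrt{\delta})$ (and their $y$ analogues) and then assemble $\DD$ from them.

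For the $\Xi$ matrices the mechanism is strong-times-weak convergence: \eqref{eq:convrate} shows $\phi_m\to\bar\phi_m$ strongly in $L^2$, whereas $\nabla\phi_n$ converges only weakly to $\nabla\bar\phi_n$ because the corrector gradient $(\nabla\chi)(\xx/\delta)$ has cell-average zero. I would split
\begin{equation*}
  \Xi^x_{mn}-(\Xi^x_{\hom})_{mn}=\langle\phi_m-\bar\phi_m,\,\partial_x\phi_n\rangle_\xx+\langle\bar\phi_m,\,\partial_x(\phi_n-\bar\phi_n)\rangle_\xx.
\end{equation*}
The first pairing is $\mathcal{O}(\sqrt{\delta})$ by Cauchy--Schwarz and \eqref{eq:convrate}. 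In the second I insert the corrector expansion for $\partial_x\phi_n$; since $\bar\phi_n$ is affine on each patch its second derivatives drop out, leaving the genuinely oscillating term $(\partial_x\chi)(\xx/\delta)\cdot\nabla\bar\phi_n$ paired against the piecewise-smooth $\bar\phi_m$, plus the $H^1$ remainder of \eqref{eq:convrate}. The mean-zero property of the corrector gradient gives interior cancellation down to $\mathcal{O}(\delta)$, with boundary-layer contributions of size $\mathcal{O}(\sqrt{\delta})$, and the remainder is $\mathcal{O}(\sqrt{\delta})$ directly, so the whole entry is $\mathcal{O}(\sqrt{\delta})$.

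The $\Sigma$ matrices are handled identically but with \emph{flux} convergence replacing gradient convergence: writing $a^\delta\nabla\phi_n\approx a^\delta\bigl(I+(\nabla_y\chi)(\xx/\delta)\bigr)\nabla\bar\phi_n$ and using that the cell average of $a(I+\nabla_y\chi)$ equals $a_{\hom}$, one obtains $a^\delta\partial_x\phi_n\rightharpoonup(a_{\hom}\nabla\bar\phi_n)\cdot e_x$ with an $H^1$ error controlled by \eqref{eq:convrate}; pairing against the strongly convergent $\phi_m$ and repeating the interior/boundary split gives $\lvert\Sigma^x_{mn}-(\Sigma^x_{\hom})_{mn}\rvert=\mathcal{O}(\sqrt{\delta})$, where $\Sigma^x_{\hom}$ is the flux-homogenized matrix $\langle\bar\phi_m,(a_{\hom}\nabla\bar\phi_n)\cdot e_x\rangle_\xx$. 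I expect this flux step to be the main obstacle: because the test functions $\bar\phi_m$ are only piecewise affine, the oscillation integral cannot be integrated by parts globally, and it is precisely the boundary layers at the domain boundary and at the patch interfaces that cap the rate at $\sqrt{\delta}$ rather than $\delta$ in dimension two.

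Finally I would assemble $\DD$. Lemma~\ref{lemma:AAPhi} gives $\Phi-\Phi_{\hom}=\mathcal{O}(\delta)$; since $\Phi_{\hom}$ is the symmetric positive-definite finite-element mass matrix, $\Phi$ is invertible for small $\delta$ with $\lVert\Phi^{-1}\rVert$ bounded uniformly, and the resolvent identity $\Phi^{-1}-\Phi_{\hom}^{-1}=\Phi^{-1}(\Phi_{\hom}-\Phi)\Phi_{\hom}^{-1}$ yields $\Phi^{-1}-\Phi_{\hom}^{-1}=\mathcal{O}(\delta)$. Writing the telescoping identity
\begin{align*}
  \Xi^x\Phi^{-1}\Sigma^x-\Xi^x_{\hom}\Phi_{\hom}^{-1}\Sigma^x_{\hom}
  &=(\Xi^x-\Xi^x_{\hom})\Phi^{-1}\Sigma^x\\
  &\quad+\Xi^x_{\hom}(\Phi^{-1}-\Phi_{\hom}^{-1})\Sigma^x+\Xi^x_{\hom}\Phi_{\hom}^{-1}(\Sigma^x-\Sigma^x_{\hom}),
\end{align*}
each summand is $\mathcal{O}(\sqrt{\delta})$ (the middle one even $\mathcal{O}(\delta)$), and the same holds for the $y$-block. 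Adding the two blocks identifies $\DD_{\hom}=\lim_{\delta\to 0}\DD$ through \eqref{eqn:DD_hom} and gives $\lvert(\DD_{\hom})_{nm}-\DD_{nm}\rvert=\mathcal{O}(\sqrt{\delta})$, as claimed.
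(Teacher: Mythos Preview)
Your proposal is correct and follows essentially the same approach as the paper: both arguments reduce everything to the corrector estimate \eqref{eq:convrate}, use strong convergence of $\phi_m\to\bar\phi_m$ together with weak (flux) convergence $a^\delta\nabla\phi_n\rightharpoonup a_{\hom}\nabla\bar\phi_n$ to handle $\Sigma^x,\Sigma^y$ and the analogous weak convergence of gradients for $\Xi^x,\Xi^y$, and then infer the bound on $\DD$ from those on its factors. The paper's own proof is a two-line sketch that simply asserts the entrywise limits with rate $\mathcal{O}(\sqrt{\delta})$ and says the conclusion for $\DD$ ``follows immediately''; your telescoping identity and resolvent bound on $\Phi^{-1}-\Phi_{\hom}^{-1}$ make explicit what the paper leaves implicit, but there is no methodological difference.
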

\begin{proof}
The proof is similar to Lemma~\ref{lemma:AAPhi}: By using \eqref{eq:convrate}
\begin{align*}
\Sigma^x_{mn} \to \langle \bar{\phi}_m, (a_{\hom} \nabla \bar{\phi}_n)_x \rangle_\xx = (\Sigma_{\hom}^x)_{mn}\,; \quad \Xi^x_{mn} \to \langle \partial_x \bar{\phi}_m, \bar{\phi}_n \rangle_\xx = (\Xi_{\hom}^x)_{mn}\,, 
\end{align*}
with convergence rate $\Or(\sqrt{\delta})$ and similarly for $y$
direction. Here $(a_{\hom} \nabla \bar{\phi}_n)_x$ denotes the
$x$-component of the vector field $a_{\hom} \nabla \bar{\phi}_n$,
which is
$a_{\hom}^{xx}\partial_x\bar{\phi}_n+a_{\hom}^{xy}\partial_y\bar{\phi}_n$. The
conclusion for $\DD$ then follows immediately.
\end{proof}

We now ready to state the main result of this section, which
  concerns the limiting scheme of \eqref{eqn:Galerkin_E}
  and~\eqref{eqn:Galerkin_O} as $\veps$ and $\delta$ go to $0$. The
  result is analogous to Proposition~\ref{lemma:heat}.
\begin{theorem}\label{thm:diff_limit_discrete}
  Consider the scheme~\eqref{eqn:Galerkin_E}
  and~\eqref{eqn:Galerkin_O} for $f^E_{M,N}$ and $f^O_{M,N}$ with the
  multiscale finite element basis, as $\varepsilon\to 0$,
  $\delta\to 0$, the scheme converges to  a
  consistent and stable numerical methods for
  the homogenized heat equation~\eqref{eqn:limit_hom_heat}. More specifically,
\begin{itemize}
\item[(1)] $\vec{\beta}^{\varepsilon,\delta}\to 0$, as $\varepsilon\to 0$;
\item[(2)] $\alpha_{m,n}^{\varepsilon,\delta}\to 0$ for all $m$ with $n>1$ as $\varepsilon\to 0$;
\item[(3)] In the zero limit of $\varepsilon$, $\alpha^{\varepsilon,\delta}_{\cdot,1}\to\alpha^{\delta}_{\cdot,1}$ that satisfies:
\begin{equation}\label{eqn:alpha_1_delta}
\Phi\cdot\vec{\alpha}^{\delta,n+1}_1 = \Phi\cdot\vec{\alpha}^{\delta,n}_1 + \frac{\Delta t}{2}\DD\cdot\vec{\alpha}^{\delta,n+1}_1\,,
\end{equation}
where
$\vec{\alpha}^\delta_1 =
[\alpha^\delta_{1,1}\,,\alpha^\delta_{1,1}\,,\cdots\,,\alpha^\delta_{M,1}]$.
\item[(4)] The convergence of $\vec{\beta}$ and $\alpha$ is of
  $\mathcal{O}(\varepsilon)$, meaning
  $\vec{\beta}^{\varepsilon,\delta} = \mathcal{O}(\varepsilon)$,
  $\alpha_{m,n}^{\varepsilon,\delta} = \mathcal{O}(\varepsilon)$ for
  all $m$ and $n>1$, and
  $\vec{\alpha}^{\varepsilon,\delta}_{\cdot,1} -
  \vec{\alpha}^{\delta}_{\cdot,1} = \mathcal{O}(\varepsilon)$.
\item[(5)] In the zero limit of $\delta$, the scheme for $\vec{\alpha}^\delta_1$ converges to that of $\vec{\alpha}_1$ that satisfies:
\begin{equation}\label{eqn:limit_hom_dis}
\Phi_{\hom}\cdot\vec{\alpha}^{n+1}_1 = \Phi_{\hom}\cdot\vec{\alpha}^n_1 + \frac{\Delta t}{2}\DD_{\hom}\cdot\vec{\alpha}^{n+1}_1 \,.
\end{equation}
\item[(6)] The convergence is of $\mathcal{O}(\sqrt{\delta})$, meaning: $\vec{\alpha}_1^\delta - \vec{\alpha}_1 = \mathcal{O}(\sqrt{\delta})$.
\item[(7)] The scheme~\eqref{eqn:limit_hom_dis} is a consistent and
  stable scheme for the homogenized heat
  equation~\eqref{eqn:limit_hom_heat} with the convergence rate being
  $\mathcal{O}(\Delta t + H^2)$.
\end{itemize}
In summary, $\alpha^{\varepsilon,\delta}$ discretizes the
  limiting equation~\eqref{eqn:limit_hom_heat} with error
  $\mathcal{O}(\varepsilon + \sqrt{\delta} + \Delta t + H^2)$, where
  the first two terms are approximation error and the last two are
  discretization error.
\end{theorem}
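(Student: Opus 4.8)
The plan is to carry out a discrete Hilbert (matched asymptotic) expansion of the coupled systems~\eqref{eqn:Galerkin_E}--\eqref{eqn:Galerkin_O} in $\varepsilon$, then pass $\delta\to 0$ using the matrix limits already recorded in Lemmas~\ref{lemma:AAPhi} and~\ref{lemma:limit_mat}, and finally verify that the surviving limiting scheme is a consistent and stable discretization of~\eqref{eqn:limit_hom_heat}. Throughout I would exploit the three structural facts supplied by the velocity discretization: $\II$ is the identity, the matrix $\II-\PP$ is the orthogonal projection annihilating only the equilibrium ($n=1$) mode (by~\eqref{eqn:BBPP}), and $\langle p_1\rangle_\xi = 1$ with $\langle p_n\rangle_\xi = 0$ for $n>1$. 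The argument would be an induction over time steps, the base case coming from the $v$-independent initial data, for which $\vec{\beta}^0 = 0$, $\alpha^0_{m,n}=0$ for $n>1$, and $\vec{\alpha}^0_{\cdot,1}$ is the vector of nodal values of $\rho_0$.

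\emph{The $\varepsilon$-limit (parts 1--4).} First I would multiply~\eqref{eqn:Galerkin_O} by $\varepsilon^2/\Delta t$; since the leading operator is $\Phi\otimes\II$ and the inductive hypothesis gives $\vec{\beta}^n = \mathcal{O}(\varepsilon)$, this yields
\[
  \vec{\beta}^{n+1} = -\varepsilon\,\bigl(\Phi^{-1}\Sigma^x\otimes\FF^{\cos} + \Phi^{-1}\Sigma^y\otimes\FF^{\sin}\bigr)\,\vec{\alpha}^{n+1} + \mathcal{O}(\varepsilon^2),
\]
which proves (1). Projecting~\eqref{eqn:Galerkin_E} onto the modes $n>1$, the stiff term $\tfrac{\Delta t}{\varepsilon^2}\Sigma^{\text{inv}}\otimes(\II-\PP)$ is nondegenerate there (with $\Sigma^{\text{inv}}$ positive definite), while the right-hand side is $\mathcal{O}(1)$ once $\vec{\beta}^{n+1}=\mathcal{O}(\varepsilon)$ is inserted; hence $\alpha^{n+1}_{m,n}=\mathcal{O}(\varepsilon^2)$ for $n>1$, giving (2) and the corresponding clause of (4). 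Projecting~\eqref{eqn:Galerkin_E} onto $n=1$ instead, the stiff term drops out because $e_1^{\mathrm T}(\II-\PP)=0$; substituting the leading-order $\vec{\beta}^{n+1}$, the factors of $\varepsilon$ cancel and the velocity sums collapse through the closure identities $\sum_{k}\FF^{\cos}_{1k}\FF^{\cos}_{k1}=\langle\cos^2\xi\rangle_\xi=\tfrac12$, $\sum_k\FF^{\sin}_{1k}\FF^{\sin}_{k1}=\tfrac12$ and $\sum_k\FF^{\cos}_{1k}\FF^{\sin}_{k1}=\langle\cos\xi\sin\xi\rangle_\xi=0$ (valid once the velocity resolution captures $\cos\xi,\sin\xi$, consistent with the $N$-independent bound). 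What remains is exactly $\Phi(\vec{\alpha}^{n+1}_1-\vec{\alpha}^n_1)=\tfrac{\Delta t}{2}\DD\,\vec{\alpha}^{n+1}_1$, i.e.~\eqref{eqn:alpha_1_delta}, proving (3); tracking the $\mathcal{O}(\varepsilon^2)$ remainders through the reduced system, whose operator $\Phi-\tfrac{\Delta t}{2}\DD$ is invertible (sum of a positive definite and a positive semidefinite matrix, since $\DD$ is negative semidefinite), closes the induction and gives the $\mathcal{O}(\varepsilon)$ rate in (4).

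\emph{The $\delta$-limit and the limiting scheme (parts 5--7).} Passing $\delta\to 0$ is then a stability/continuity argument: by Lemma~\ref{lemma:limit_mat}, $\DD\to\DD_{\hom}$ at rate $\mathcal{O}(\sqrt\delta)$, and by Lemma~\ref{lemma:AAPhi}, $\Phi\to\Phi_{\hom}$ at rate $\mathcal{O}(\delta)$; since $\DD$ is symmetric negative semidefinite and $\Phi$ symmetric positive definite, the amplification map of~\eqref{eqn:alpha_1_delta} is non-expansive in the $\Phi$-norm uniformly in $\delta$, so over a fixed horizon $\vec{\alpha}^\delta_1\to\vec{\alpha}_1$ solving~\eqref{eqn:limit_hom_dis} at rate $\mathcal{O}(\sqrt\delta)$, giving (5)--(6). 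For (7) I would identify $\DD_{\hom}=\Xi^x_{\hom}\Phi_{\hom}^{-1}\Sigma^x_{\hom}+\Xi^y_{\hom}\Phi_{\hom}^{-1}\Sigma^y_{\hom}$ as the mixed (dual) Galerkin realization of $\tfrac12\nabla_\xx\cdot(a_{\hom}\nabla_\xx)$ on the linear finite element space: $\Phi_{\hom}^{-1}\Sigma^x_{\hom}$ produces the $L^2$-projection of $a_{\hom}\partial_x\rho_H$ and $\Xi^x_{\hom}$ takes its weak derivative, so on smooth data the truncation error is $\mathcal{O}(H^2)$ (a symbol/Taylor expansion shows $\DD_{\hom}$ agrees with the standard stiffness matrix $\AA_{\hom}$ to leading order although they differ at high frequency). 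Combined with the unconditional stability of backward Euler for the pair $(\Phi_{\hom},\DD_{\hom})$, Lax equivalence yields convergence $\mathcal{O}(\Delta t+H^2)$. The summary bound then follows by chaining (4), (6), (7) through the triangle inequality: $\vec{\alpha}^{\varepsilon,\delta}_{\cdot,1}\to\vec{\alpha}^\delta_{\cdot,1}$ at $\mathcal{O}(\varepsilon)$, $\vec{\alpha}^\delta_{\cdot,1}\to\vec{\alpha}_{\cdot,1}$ at $\mathcal{O}(\sqrt\delta)$, and~\eqref{eqn:limit_hom_dis} approximates~\eqref{eqn:limit_hom_heat} at $\mathcal{O}(\Delta t+H^2)$.

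\emph{Main obstacle.} The delicate point is the uniformity in the $\varepsilon$-expansion: I must show the $\mathcal{O}(\varepsilon)$ and $\mathcal{O}(\varepsilon^2)$ remainders are controlled uniformly in $\delta$, $H$, $\Delta t$ and the time index $n$, which is a genuine discrete Hilbert-expansion estimate requiring that the reduced even operator and the odd stiff operator be invertible with $\varepsilon$-independent bounds despite the two-way even--odd and inter-mode coupling. A secondary subtlety is the consistency of the nonstandard operator $\DD_{\hom}$ in (7), since it is not the symmetric finite element stiffness matrix $\AA_{\hom}$ and its second-order accuracy must be argued directly rather than inherited from Proposition~\ref{lemma:heat}.
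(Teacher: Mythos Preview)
Your proposal follows essentially the same strategy as the paper: a discrete Hilbert expansion in $\varepsilon$ (the paper writes it as a formal asymptotic series $\vec{\alpha}^{\varepsilon,\delta}=\vec{\alpha}^{0,\delta}+\varepsilon\vec{\alpha}^{1,\delta}+\cdots$ rather than an induction on time steps, but the computations are identical), projection onto the $n=1$ and $n>1$ velocity modes using $\PP(\II-\PP)=0$, the same closure identities for $\FF^{\cos},\FF^{\sin}$, invocation of Lemmas~\ref{lemma:AAPhi} and~\ref{lemma:limit_mat} for the $\delta$-limit, and the consistency analysis of $\DD_{\hom}$ via Lemma~\ref{lemma:1st_derivative}. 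One caveat: your stability arguments in parts (4)--(6) rest on $\DD$ being symmetric negative semidefinite, which is not obvious from~\eqref{def:DD} since $\Sigma^x$ is neither symmetric nor a scalar multiple of $\Xi^x$ when $a^\delta$ is nonconstant; the paper does not prove this either, simply asserting that ``stability is immediate due to the implicit time discretization,'' so you are not missing an ingredient the paper supplies, but you should be aware the claim needs its own justification.
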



The scheme~\eqref{eqn:limit_hom_dis} is not the same as the
scheme~\eqref{eq:scheme_hom_heat} for the homogenized heat equation,
which is the homogenized limit of the MsFEM
scheme~\eqref{eqn:Galerkin_rho}.  In general, the matrices
$\AA_{\hom}$ and $\DD_{\hom}$ are not the same.  In fact, the
scheme~\eqref{eqn:limit_hom_dis} is not even a Galerkin scheme. To
analyze the scheme, we will treat it more like a finite difference
approximation to the homogenized heat equation. The consistency is
given by the following lemma.

\begin{lemma}\label{lemma:1st_derivative} 
Let $f$ be a function in $C^3$. Let
$$f_n = f(\xx_n)\,,\quad g_n = a_{\hom}^{xx}\partial_x f(\xx_n)+a_{\hom}^{xy}\partial_y f(\xx_n).$$
Denote $\vec{f} = [f_1\,,\cdots\,,f_n]$ and $\vec{g} = [g_1\,,\cdots\,,g_n]$. then
\begin{equation}
\Phi^{-1}_{\hom}\cdot\Sigma^x_{\hom}\cdot \vec{f}-\vec{g} = \mathcal{O}(H^2)\,,
\end{equation}
where $H$ is the coarse mesh size of the discretization. Similarly,
$\Phi^{-1}_{\hom}\cdot\Sigma^y_{\hom}$ is an $\mathcal{O}(H^2)$
approximation to $a_{\hom}^{yx}\partial_x +a_{\hom}^{yy}\partial_y$.
\end{lemma}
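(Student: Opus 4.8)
The plan is to read every matrix–vector product as an operation on finite element functions, and then reduce the claim to a local consistency (truncation) estimate combined with the stability of the mass matrix. Recall from Lemma~\ref{lemma:limit_mat} and Lemma~\ref{lemma:AAPhi} that the $\bar{\phi}_m$ are the standard (tensor–product) hat functions on the coarse mesh, so $\bar{\phi}_m(\xx_n)=\delta_{mn}$, that $(\Phi_{\hom})_{nm}=\langle\bar{\phi}_m,\bar{\phi}_n\rangle_\xx$ is the $P_1$ mass matrix, and that $(\Sigma_{\hom}^x)_{mn}=\langle\bar{\phi}_m,(a_{\hom}\nabla\bar{\phi}_n)_x\rangle_\xx$ with $a_{\hom}$ a constant matrix. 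Writing $I_H$ for nodal interpolation, the entries of $\vec{f}$ and $\vec{g}$ are exactly the coefficients of $I_Hf=\sum_n f_n\bar{\phi}_n$ and $I_Hg=\sum_n g_n\bar{\phi}_n$, where $g=(a_{\hom}\nabla f)_x=a_{\hom}^{xx}\partial_x f+a_{\hom}^{xy}\partial_y f$. Hence, using that $a_{\hom}$ is constant,
\[
(\Sigma^x_{\hom}\vec{f})_m=\langle\bar{\phi}_m,(a_{\hom}\nabla I_Hf)_x\rangle_\xx,\qquad (\Phi_{\hom}\vec{g})_m=\langle\bar{\phi}_m,I_Hg\rangle_\xx .
\]
Setting $\vec{r}:=\Sigma^x_{\hom}\vec{f}-\Phi_{\hom}\vec{g}$, we have $\Phi^{-1}_{\hom}\Sigma^x_{\hom}\vec{f}-\vec{g}=\Phi^{-1}_{\hom}\vec{r}$, so the statement follows from two ingredients: (i) the stability bound $\|\Phi^{-1}_{\hom}\|_\infty=\mathcal{O}(H^{-d})$, and (ii) the consistency bound $\|\vec{r}\|_\infty=\mathcal{O}(H^{d+2})$, where $d$ is the spatial dimension ($d=2$ here).

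Ingredient (i) is standard finite element stability: the $P_1$ mass matrix is symmetric positive definite and spectrally equivalent to $H^dI$, and on a regular grid its scaled inverse $H^d\Phi^{-1}_{\hom}$ is uniformly bounded in the max norm (via the exponential decay of inverse entries of well-conditioned banded matrices); I would simply quote this.

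For ingredient (ii), I would split each entry, using $g=(a_{\hom}\nabla f)_x$, as
\[
r_m=\underbrace{\langle\bar{\phi}_m,(a_{\hom}\nabla(I_Hf-f))_x\rangle_\xx}_{\text{(A)}}-\underbrace{\langle\bar{\phi}_m,I_Hg-g\rangle_\xx}_{\text{(B)}}.
\]
Term (B) is harmless: since $f\in C^3$ and $a_{\hom}$ is constant, $g\in C^2$, so $\|I_Hg-g\|_\infty=\mathcal{O}(H^2)$ while $\int\bar{\phi}_m=\mathcal{O}(H^d)$, giving (B)$=\mathcal{O}(H^{d+2})$. Term (A) is the crux: a naive bound yields only $\mathcal{O}(H^{d+1})$, since $\nabla(I_Hf-f)=\mathcal{O}(H)$ pointwise, so the extra power must come from cancellation. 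To extract it I would integrate by parts (no boundary terms under the periodic boundary conditions) to get $\langle\bar{\phi}_m,\partial_{x_i}(I_Hf-f)\rangle_\xx=-\langle\partial_{x_i}\bar{\phi}_m,I_Hf-f\rangle_\xx$, and split over the elements $T$ in the support of $\bar{\phi}_m$, on each of which $\partial_{x_i}\bar{\phi}_m$ is a constant of size $\mathcal{O}(H^{-1})$. The two facts driving the cancellation are that the elementwise integral of the $P_1$ interpolation error satisfies $\int_T(I_Hf-f)=H^{d+2}\,Q\!\bigl(D^2f(\xx_m)\bigr)+\mathcal{O}(H^{d+3})$, with the same leading Hessian contraction $Q(\cdot)$ at the common node $\xx_m$ across the symmetric patch, and that $\int\partial_{x_i}\bar{\phi}_m=0$. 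Consequently the leading $\mathcal{O}(H^{d+1})$ contributions cancel in the sum, and $C^3$ regularity bounds the remaining variation of $D^2f$ over the patch by $\mathcal{O}(H)$, leaving (A)$=\mathcal{O}(H^{d+2})$.

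Combining (i) and (ii) gives $\|\Phi^{-1}_{\hom}\Sigma^x_{\hom}\vec{f}-\vec{g}\|_\infty=\mathcal{O}(H^{-d})\cdot\mathcal{O}(H^{d+2})=\mathcal{O}(H^2)$, and the $y$-component statement is identical after exchanging $x\leftrightarrow y$. I expect the main obstacle to be the cancellation in term (A): it is precisely the nodal gradient superconvergence of $P_1$ elements, and it relies both on the $C^3$ regularity and on the symmetry of the (uniform, tensor–product) coarse grid, which is what makes the leading elementwise interpolation-error contributions cancel against $\int\partial_{x_i}\bar{\phi}_m=0$. On a non-symmetric mesh one would recover only first-order consistency, so I would make the uniform-grid assumption explicit at the outset.
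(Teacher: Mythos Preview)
Your route is the paper's route. Both reduce to the residual $\vec{r}=\Sigma^x_{\hom}\vec{f}-\Phi_{\hom}\vec{g}$, integrate by parts so the derivative lands on $\bar{\phi}_m$, replace $I_Hf$ by $f$ and $g$ by $I_Hg$ via the $\mathcal{O}(H^2)$ interpolation estimates, and then apply $\Phi_{\hom}^{-1}$. Your terms (A) and (B) are exactly the two replacement steps in the paper's chain
\[
\langle\bar{\phi}_m,\partial_x I_Hf\rangle=-\langle\partial_x\bar{\phi}_m,I_Hf\rangle\approx-\langle\partial_x\bar{\phi}_m,f\rangle=\langle\bar{\phi}_m,\partial_xf\rangle\approx\langle\bar{\phi}_m,I_Hg\rangle.
\]

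Where you differ is in bookkeeping, and you are the more careful one. The paper simply asserts ``$\Phi_{\hom}^{-1}=\mathcal{O}(1)$'' and writes a bare $\mathcal{O}(H^2)$ at each step, without tracking the $H^d$ volume of $\operatorname{supp}\bar{\phi}_m$ or the $H^{-1}$ size of $\partial_x\bar{\phi}_m$. You correctly note that $\|\Phi_{\hom}^{-1}\|_\infty=\mathcal{O}(H^{-d})$ and that the naive bound on (A) is only $\mathcal{O}(H^{d+1})$, so the paper's argument, read literally, yields $\mathcal{O}(H)$ rather than $\mathcal{O}(H^2)$. Your proposed remedy---extracting the extra power of $H$ from the cancellation $\int\partial_{x_i}\bar{\phi}_m=0$ against the common Hessian part of the elementwise interpolation error, i.e.\ the standard $P_1$ nodal-gradient superconvergence on a uniform mesh---is the right way to close that gap, and your explicit caveat that this needs a symmetric coarse grid is well placed. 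In short, your proposal is a more rigorous version of the paper's own argument rather than a different one.
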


\begin{proof}
Note that $\Phi_{\hom}$ is invertible with bounded inverse of $\mathcal{O}(1)$, it thus suffices to show that \begin{equation}
\Sigma^x_{\hom}\cdot \vec{f}-\Phi_{\hom}\cdot\vec{g} = \mathcal{O}(H^2)\,,
\end{equation}
meaning for each entry we need to show:
\begin{equation}
\left(\Sigma^x_{\hom}\cdot \vec{f}\right)_m = \sum_n f_n\langle\bar{\phi}_m\,,a^{xx}_{\hom}\partial_x\bar{\phi}_n+a^{xy}_{\hom}\partial_y\bar{\phi}_n\rangle_\xx
\end{equation}
is close enough to:
\begin{equation}
\left(\Phi_{\hom}\cdot\vec{g}\right)_m = \sum_n g_n\langle\bar{\phi}_m\,,\bar{\phi}_n\rangle_\xx\,.
\end{equation}
Since $\bar{\phi}$ is piecewise bilinear function in 2D, for $f$ in
$C^3$, standard interpolation results yield that
\begin{equation}
f - \sum_n f_n\bar{\phi}_n = \mathcal{O}(H^2)\,,\quad a^{xx}_{\hom}\partial_xf+a^{xy}_{\hom}\partial_yf - \sum_n g_n\bar{\phi}_n = \mathcal{O}(H^2)\,.
\end{equation}
As a result, we have
\begin{align*}
\sum_n f_n\langle\bar{\phi}_m\,,a^{xx}_{\hom}\partial_x\bar{\phi}_n+a^{xy}_{\hom}\partial_y\bar{\phi}_n\rangle_\xx &= -\sum_nf_n\langle a^{xx}_{\hom}\partial_x\bar{\phi}_m+a^{xy}_{\hom}\partial_y\bar{\phi}_m\,,\bar{\phi}_n \rangle_\xx\\
&= -\langle a^{xx}_{\hom}\partial_x\bar{\phi}_m+a^{xy}_{\hom}\partial_y\bar{\phi}_m\,,f \rangle_\xx + \mathcal{O}(H^2)\\
& = \langle \bar{\phi}_m\,,a^{xx}_{\hom}\partial_x f+a^{xy}_{\hom}\partial_yf \rangle_\xx + \mathcal{O}(H^2)\\
& =\sum_ng_n\langle \bar{\phi}_m\,,\bar{\phi}_n \rangle_\xx + \mathcal{O}(H^2)\,,
\end{align*}
which completes the proof.
\end{proof}
\begin{remark}\label{rmk:derivatives}
Following the same proof, we could see that $\mathcal{O}(H^2)$ error is produced if we use the following discretization for the approximation to the corresponding differential operator:
\begin{align}
& \partial_x\quad\rightarrow\quad -\Phi_{\hom}^{-1}\cdot\Xi^x_{\hom}\\
& \partial_y\quad\rightarrow\quad -\Phi_{\hom}^{-1}\cdot\Xi^y_{\hom}\,.
\end{align}
Note that the derivatives are on the second argument in the inner product thereby gives  a negative sign. Combined with the previous lemma, this means for $f\in C^2$,
\begin{align}
\Phi^{-1}_{\hom}\cdot\DD_{\hom} = \Phi^{-1}_{\hom}\cdot\Xi^x_{\hom}\cdot\Phi^{-1}_{\hom}\cdot\Sigma^x_{\hom} + \Phi^{-1}_{\hom}\cdot\Xi^y_{\hom}\cdot\Phi^{-1}_{\hom}\cdot\Sigma^y_{\hom} 
\end{align}
approximates $\nabla\cdot(a_{\hom}\nabla)$ with $\mathcal{O}(H^2)$ accuracy.
\end{remark}

With this lemma we could show the proof for Theorem~\ref{thm:diff_limit_discrete}.
\begin{proof}[Proof of Theorem~\ref{thm:diff_limit_discrete}]
We first perform asymptotic expansion of the two equations in~\eqref{eqn:Galerkin_O} and~\eqref{eqn:Galerkin_E}. To do that we also need to asymptotically expand $\vec{\alpha}$ and $\vec{\beta}$:
\begin{align}\label{eqn:asymptotic}
\vec{\alpha}^{\varepsilon,\delta} = \vec{\alpha}^{0,\delta} + \varepsilon\vec{\alpha}^{1,\delta} +\cdots\,,\\
\vec{\beta}^{\varepsilon,\delta} = \vec{\beta}^{0,\delta} + \varepsilon\vec{\beta}^{1,\delta} +\cdots\,.
\end{align}
We plug the expansion back into~\eqref{eqn:Galerkin_O} and~\eqref{eqn:Galerkin_E}, and match by the order of $\varepsilon$, then we get:
\begin{itemize}
\item{Leading order of~\eqref{eqn:Galerkin_O}:}
\begin{equation}\label{eqn:Galerkin_O_lead}
(\Phi\otimes\II)\cdot(\vec{\beta}^{0,\delta})^{n+1} = -\varepsilon(\Sigma^x\otimes\FF^\text{cos}+\Sigma^y\otimes\FF^\text{sin})\cdot(\vec{\alpha}^{0,\delta})^{n+1}\,.
\end{equation}
Then $(\vec{\beta}^{0,\delta})^{n+1} = \mathcal{O}(\varepsilon)$ is immediate if $(\vec{\alpha}^{0,\delta})^{n+1}$ can be shown to be at most $\mathcal{O}(1)$.
\item{Leading order of~\eqref{eqn:Galerkin_E}}
\begin{equation}\label{eqn:Galerkin_E_lead}
\left[\Sigma^\text{inv}\otimes(\II-\PP)\right]\cdot(\vec{\alpha}^{0,\delta})^{n+1} = 0\,.
\end{equation}
Since $\Sigma^\text{inv}$ is not singular this indicates that $(\vec{\alpha}^{0,\delta})^{n+1}$ is in the null space of $\II-\PP$. Considering the formula in~\eqref{eqn:BBPP} it is easy to see that:
\begin{equation}\label{eqn:Galerkin_E_lead_alpha}
\alpha^{0,\delta}_{m,n}=0\,,\quad\forall n\geq2\,, m\geq 1\,.
\end{equation}
Therefore (2) is shown, which indicates that the information in $\vec{\alpha}^{0,\delta}$ could be compressed into $\vec{\alpha}^{0,\delta}_1$.
\item{Applying $\PP$ on both sides of~\eqref{eqn:Galerkin_E}}
\begin{equation}\label{eqn:Galerkin_E_proj}
(\Phi\otimes\PP)\cdot(\vec{\alpha}^{0,\delta})^{n+1} = (\Phi\otimes\PP)\cdot(\vec{\alpha}^{0,\delta})^{n} - \frac{\Delta t}{\varepsilon}(\Xi^x\otimes\PP\FF^\text{cos}+\Xi^y\otimes\PP\FF^\text{sin})\cdot(\vec{\beta}^{0,\delta})^{n+1}\,.
\end{equation}
Here we have used the fact that $\PP$ is a projection and thus $\PP(\II - \PP) = 0$.
\end{itemize}
Combining~\eqref{eqn:Galerkin_O_lead} and~\eqref{eqn:Galerkin_E_proj}, one has:
\begin{equation*}
(\Phi\otimes\PP)\cdot(\vec{\alpha}^{0,\delta})^{n+1} = (\Phi\otimes\PP)\cdot(\vec{\alpha}^{0,\delta})^{n} + \Delta t[ (\Xi^x\cdot\Phi^{-1}\cdot\Sigma^x)\otimes(\PP\cdot\FF^\text{cos}\cdot\FF^\text{cos})+ (\Xi^y\cdot\Phi^{-1}\cdot\Sigma^y)\otimes(\PP\cdot\FF^\text{sin}\cdot\FF^\text{sin})]\cdot(\vec{\alpha}^{0,\delta})^{n+1}\,.
\end{equation*}

Due to~\eqref{eqn:Galerkin_E_lead_alpha}, all elements in $\vec{\alpha}^{0,\delta}$ diminish as $\varepsilon\to0$ except the $\vec{\alpha}^{0,\delta}_1$ term. For conciseness of the notation we denote it $\vec{\alpha}^\delta_1$. Considering
$$\PP\cdot\FF^\text{cos}\cdot\FF^\text{cos}=\PP\cdot\FF^\text{sin}\cdot\FF^\text{sin} = \frac{1}{2}\Id\,,$$
where $\Id$ stands for identity matrix, we compress all $n\geq 2$ terms in $\vec{\alpha}$ and have:
\begin{align*}
\Phi\cdot\vec{\alpha}^{\delta,n+1}_1 &= \Phi\cdot\vec{\alpha}^{\delta,n}_1 + \frac{1}{2}\Delta t\left[(\Xi^x\cdot\Phi^{-1}\cdot\Sigma^x)+(\Xi^y\cdot\Phi^{-1}\cdot\Sigma^y)\right]\cdot\vec{\alpha}^{\delta,n+1}_1\\
&=\Phi\cdot\vec{\alpha}^{\delta,n}_1 + \frac{1}{2}\Delta t\DD\cdot\vec{\alpha}^{\delta,n+1}_1\,,
\end{align*}
which shows (3). The convergence rate stated in (4) comes from the asymptotic expansion~\eqref{eqn:asymptotic}. To show it rigorously one also needs to write down the equation for $\vec{\alpha}^{1,\delta}$ and show the boundedness, which we will neglect the details here. 

Then (5) is obvious according to Lemma~\ref{lemma:limit_mat}, and the convergence rate stated in (6) comes from subtracting the two schemes~\eqref{eqn:alpha_1_delta} and~\eqref{eqn:limit_hom_dis}:
\begin{align*}
&\Phi_{\hom}\cdot(\vec{\alpha}^{n+1} - \vec{\alpha}^{\delta,n+1}) + (\Phi_{\hom} - \Phi)\cdot\vec{\alpha}^{\delta,n+1}\\
= & \Phi_{\hom}\cdot(\vec{\alpha}^{n} - \vec{\alpha}^{\delta,n}) + (\Phi_{\hom} - \Phi)\cdot\vec{\alpha}^{\delta,n} + \frac{\Delta t}{2} \DD_{\hom}\cdot(\vec{\alpha}^{n+1} - \vec{\alpha}^{\delta,n+1}) + \frac{\Delta t}{2}(\DD_{\hom} - \DD)\cdot\vec{\alpha}^{\delta,n+1}
\end{align*}
Assuming that $\vec{\alpha}^{\delta,n+1} = \mathcal{O}(1)$, it is clear the error cumulated is governed by $\DD_{\hom}-\DD$, which is of $\mathcal{O}(\sqrt{\delta})$.

Finally to show (7) that~\eqref{eqn:limit_hom_dis} is a consistent scheme for the limit heat equation. To do that we plug the exact solution to the homogenized heat equation~\eqref{eqn:limit_hom_heat} into the scheme. Suppose $u(t_n)$
is the true solution at time $t_n$ and $\vec{u}(t_n)$ the list of evaluation of $u(t_n)$ at the grid points, then:
\begin{align*}
&\Phi_{\hom}\cdot\vec{u}(t_{n+1}) - \Phi_{\hom}\cdot\vec{u}(t_n) - \frac{\Delta t}{2}\DD_{\hom}\cdot\vec{u}(t_{n+1})\\
=&\Phi_{\hom}\cdot \left[\vec{u}(t_{n})+\Delta t\partial_t\vec{u}(t_n)+\cdots\right] - \Phi_{\hom}\cdot\vec{u}(t_n) - \frac{\Delta t}{2}\DD_{\hom}\cdot \left[\vec{u}(t_{n})+\Delta t\partial_t\vec{u}(t_n)+\cdots\right]\\
=&\Delta t\Phi_{\hom}\cdot\left[\partial_t\vec{u}(t_n)-\frac{1}{2}\Phi^{-1}_{\hom}\DD_{\hom}\vec{u}(t_n)\right] + \mathcal{O}((\Delta t)^2)\,.
\end{align*}
By Lemma~\ref{lemma:1st_derivative} and Remark~\ref{rmk:derivatives}, it is shown that $\Phi^{-1}_{\hom}\DD_{\hom}$ presents an $\mathcal{O}(H^2)$ approximation to $\nabla\cdot\left(a_{\hom}\nabla\right)$ given $u\in C^2(\rd\xx)$, and this leads to the final error term:
\begin{align*}
&\Phi_{\hom}\cdot\vec{u}(t_{n+1}) - \Phi_{\hom}\cdot\vec{u}(t_n) - \frac{\Delta t}{2}\DD_{\hom}\cdot\vec{u}(t_{n+1})\\
=&\Delta t\Phi_{\hom}\cdot\left[\partial_tu(t_n)-\frac{1}{2}\nabla\cdot\left(a_{\hom}\nabla u(t_n)\right)\right] + \Delta t^2 + \Delta tH^2\\
=& \mathcal{O}(\Delta tH^2+(\Delta t)^2)\,.
\end{align*}
This indicates that the cumulative $\mathcal{O}(1)$ time truncation error is $\mathcal{O}(H^2+ \Delta t)$. Stability is immediate due to the implicit time discretization, and this finishes the proof for the theorem.
\end{proof}

\begin{remark}\label{rmk:conv_1D}
To show (4), we used the fact that $\DD_{\hom}-\DD=\mathcal{O}(\sqrt{\delta})$ in higher dimension. This could be imporved to $\DD_{\hom}-\DD=\mathcal{O}(\delta)$ in 1D.
\end{remark}

\begin{remark}\label{rmk:asym_limit}
As stated in Remark~\ref{rmk:asym_formulation}, it is possible to keep the $a^\delta$ on the left side of the even equation, but if we follow the proof shown above, an asymmetric formulation will be obtained for the limiting heat equation. Indeed, Equation~\eqref{eqn:Galerkin_O_lead} is kept, and Equation~\eqref{eqn:Galerkin_E_lead} will be changed to:
\begin{equation}
\left[\Phi\otimes(\II-\PP)\right]\cdot\vec{\alpha}^{n+1} = 0\,,
\end{equation}
which leads to the same conclusion that $\vec{\alpha}^{n+1}$ is in the null space of $\II-\PP$. In order to close the system, instead of applying $\PP$ onto~\eqref{eqn:Galerkin_E}, we do so to~\eqref{eqn:Galerkin_asym_E}:
\begin{equation}
\left(\Sigma\otimes\PP\right)\cdot\vec{\alpha}^{n+1}=\left(\Sigma\otimes\PP\right)\cdot\vec{\alpha}^n-\left\{\frac{\Delta t}{\varepsilon}\Sigma^x\otimes\PP\FF^{\text{cos}}+\frac{\Delta t}{\varepsilon}\Sigma^y\otimes\PP\FF^{\text{sin}}\right\}\cdot\vec{\beta}^{n+1}\,.
\end{equation}
and plugging in~\eqref{eqn:Galerkin_O_lead}, one has:
\begin{align*}
\Sigma\cdot\vec{\alpha}^{n+1}_1 &= \Sigma\cdot\vec{\alpha}^{n}_1 + \Delta t\left[(\Sigma^x\cdot\Phi^{-1}\cdot\Sigma^x)+(\Sigma^y\cdot\Phi^{-1}\cdot\Sigma^y)\right]\cdot\vec{\alpha}^{n+1}_1\,,
\end{align*}
Note that according to the definition of $\Sigma^x_{mn} = \langle\phi_m\,,a^\delta \partial_x\phi_n\rangle_\xx$, it is not a symmetric matrix. The scheme above is roughly speaking a discretization for:
\begin{equation*}
a^\delta \partial_t\rho = a^\delta \nabla_\xx\cdot(a^\delta\nabla_\xx\rho)\,.
\end{equation*}
Aside from the fact that MsFEM convergence is unknown to this
equation, the numerical solution also fails to respect the symmetry,
which is undesirable. 
\end{remark}

\section{Numerical example}
In this section we report several numerical tests to 
demonstrate the effectiveness of our method for transport equation
with multiscale scattering coefficient.

\subsection{1D example}
In the first example we set the media as:
\begin{equation*}
a = 1.1+\sin{(10\pi x)}\,,\quad\text{with}\quad x\in[-1,1]
\end{equation*}
The media is periodic with ten periods in the domain $[-1,1]$. We first check the consistency. We compute the equation with $N_x$ set as $50$, $100$ and $200$ respectively, and we do not observe difference in the numerical solution. We note here that having $N_x = 50$ means putting five points in one period and the solution looks very under-resolved, but still at the discrete point, the numerical solution very well captures the result given by a much more resolved $N_x=200$ case. We also test such consistency on the transport equation level and numerically we observe that by setting $N_x=100$ and $N_x=200$ we obtain the same solution.

\begin{figure}[htbp]
\centering
\includegraphics[width = 0.45\textwidth]{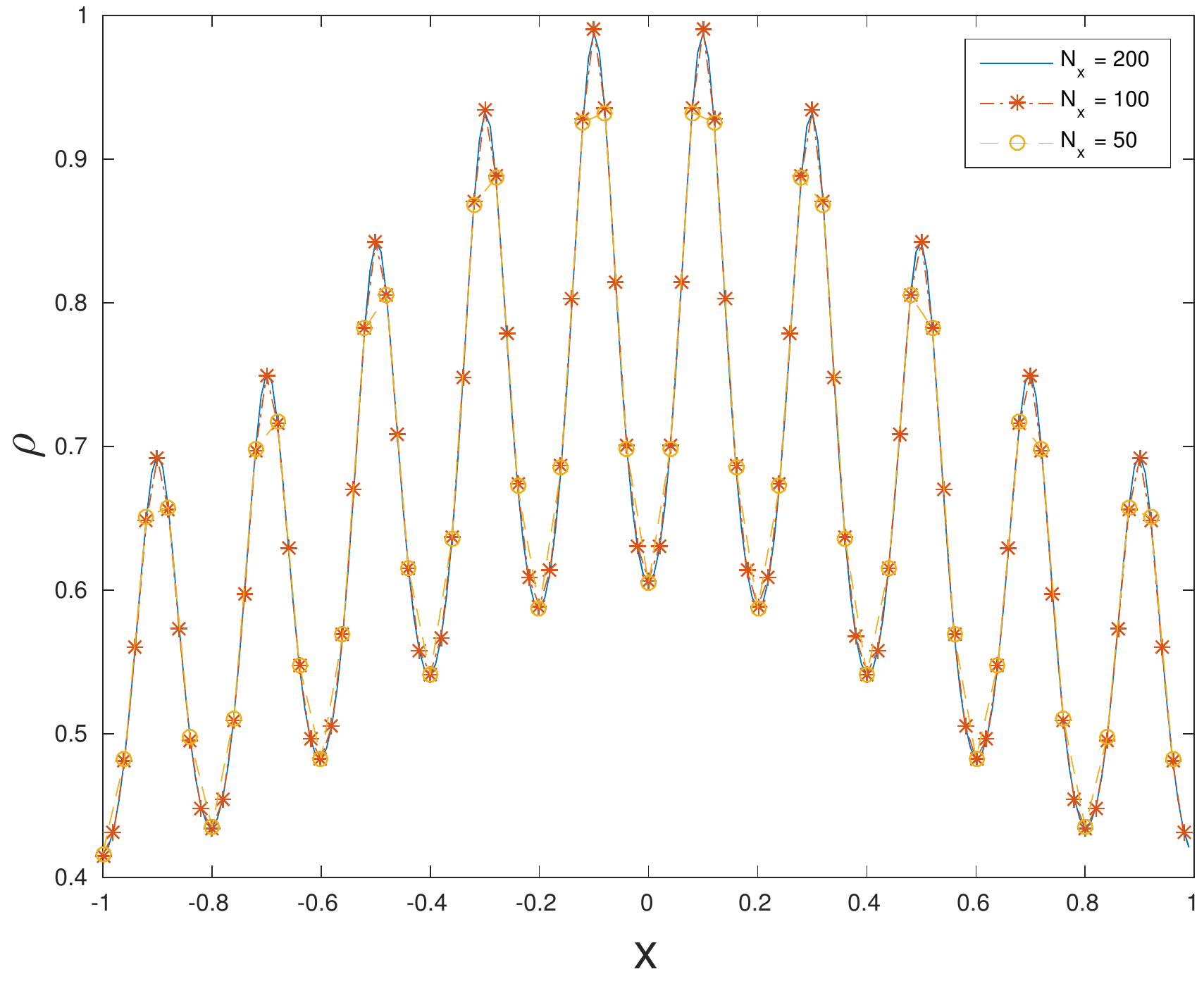}
\includegraphics[width = 0.45\textwidth]{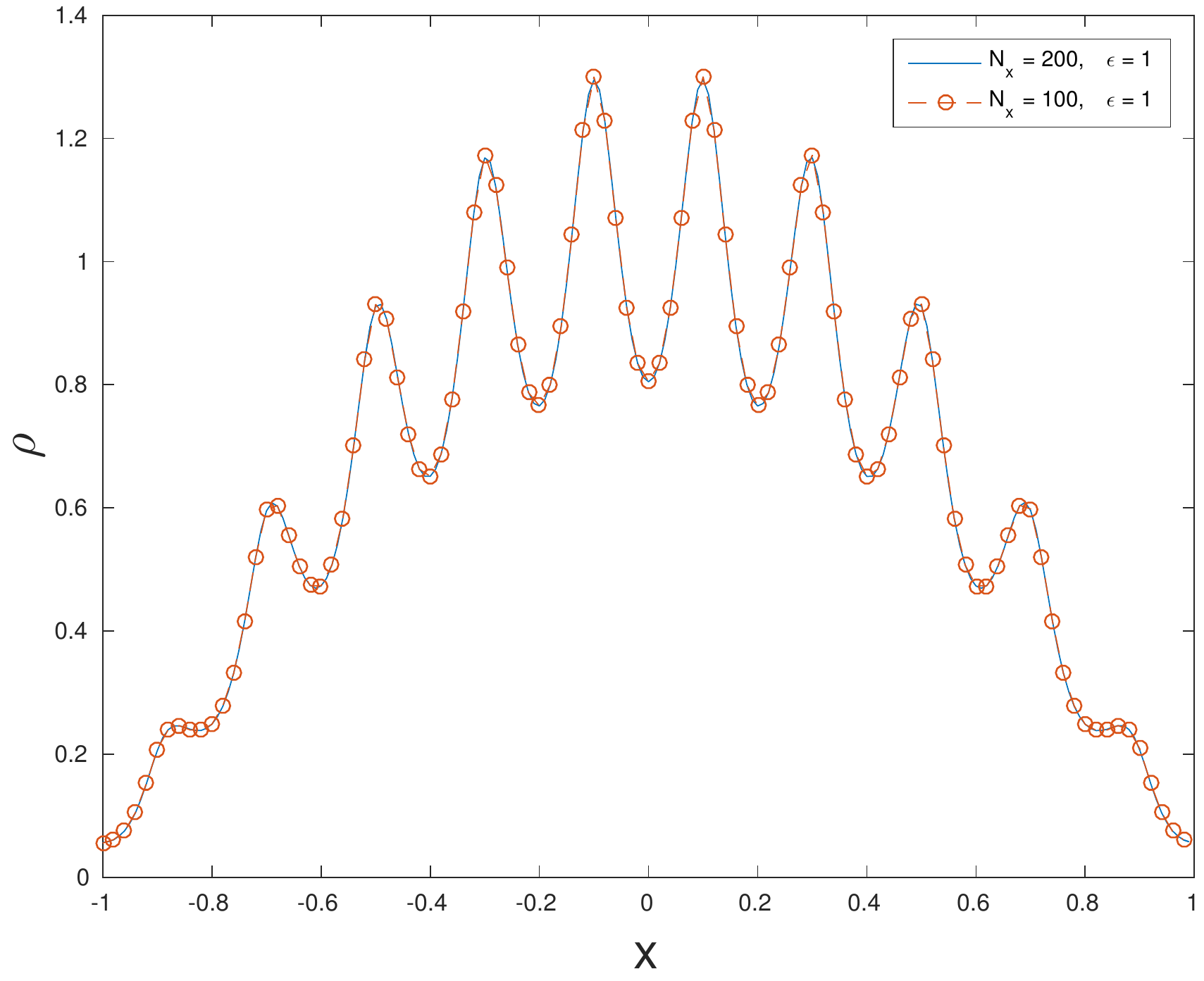}
\caption{The numerical solution is captured well with under-resolved grid points for both heat limit and the transport equation. The transport equation (shown on the right) is computed with $\varepsilon = 1$.}
\end{figure}

Then we test the convergence towards the heat limit with the Knudsen
number converging to zero. Numerical solution provided by the
under-resolved scheme for the transport equation converges to the
resolved numerical solution to the heat limit, see in Figure~\ref{fig:1D_10pi_eps_limit}.

\begin{figure}[htbp]
\centering
\includegraphics[width = 0.65\textwidth]{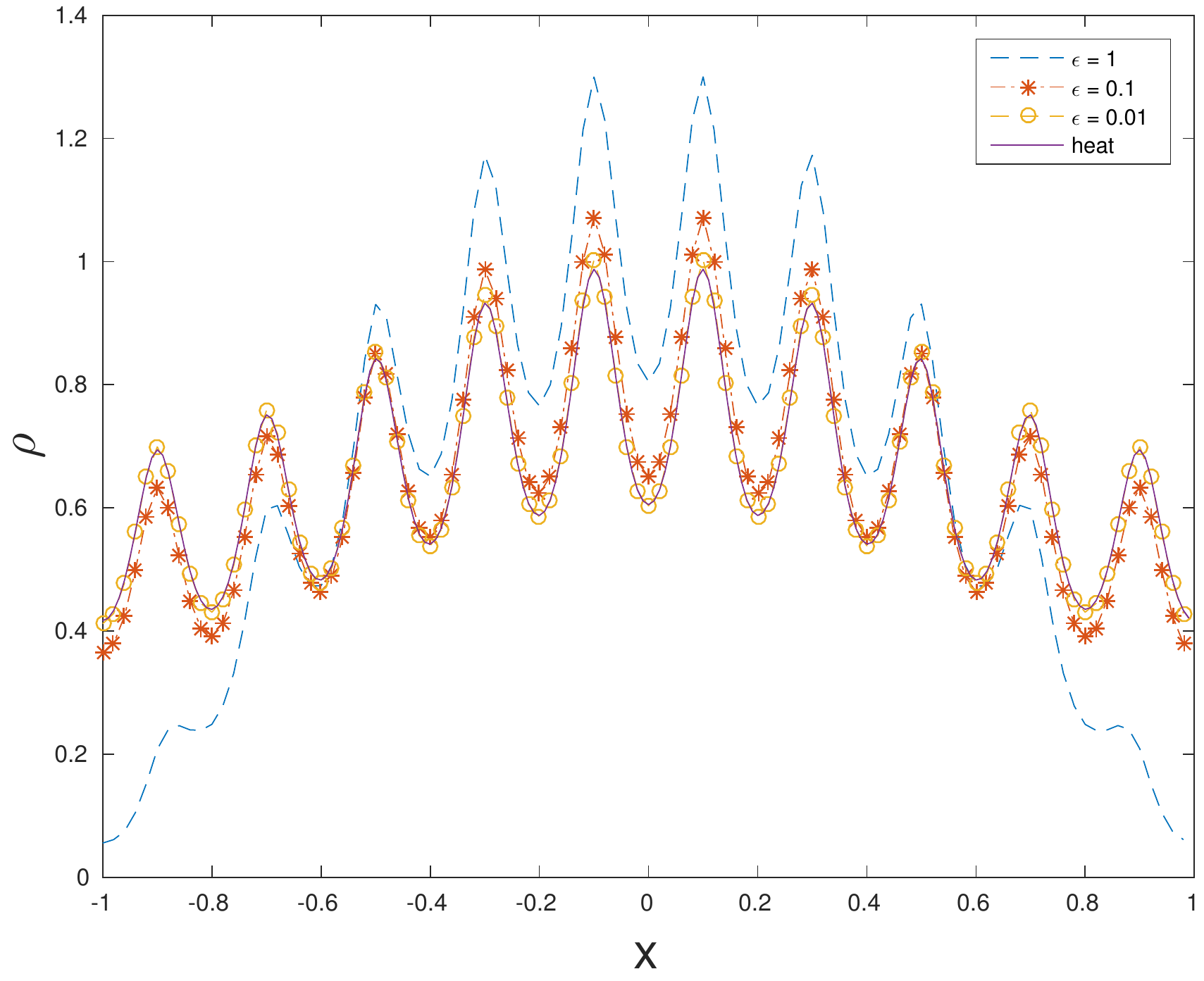}
\caption{With shrinking Knudsen number $\varepsilon$, the numerical solution for the transport equation converges to that to of the heat equation. Here the heat equation is computed with full-resolved mesh while for the computation of the transport equation, we use $N_x=100$ mesh points.}\label{fig:1D_10pi_eps_limit}
\end{figure}

The same process is repeated with a even more challenging media set as:
\begin{equation*}
a = 1.1+\sin{(20\pi x)}\,,\quad\text{with}\quad x\in[-1,1]
\end{equation*}
Here the oscillation in the media is even stronger. We put $20$ periods in the domain of $[-1,1]$. The consistency is plotted in Figure~\ref{fig:1D_20pi_resolve} where we show numerical results provided by setting $N_x$ equal to $50$, $100$ and $200$ respectively. On average with $50$ grid points, one period gets about $2.5$ grid points and the solution is well below being resolved. Still we see the numerical solution is captured very well at the discrete points. Same consistency is observed numerically for the transport equation as well. The convergence of under-resolved transport equation computed with our method towards the heat limit is demonstrated in Figure~\ref{fig:1D_20pi_limit}.

\begin{figure}[htbp]
\centering
\includegraphics[width = 0.45\textwidth]{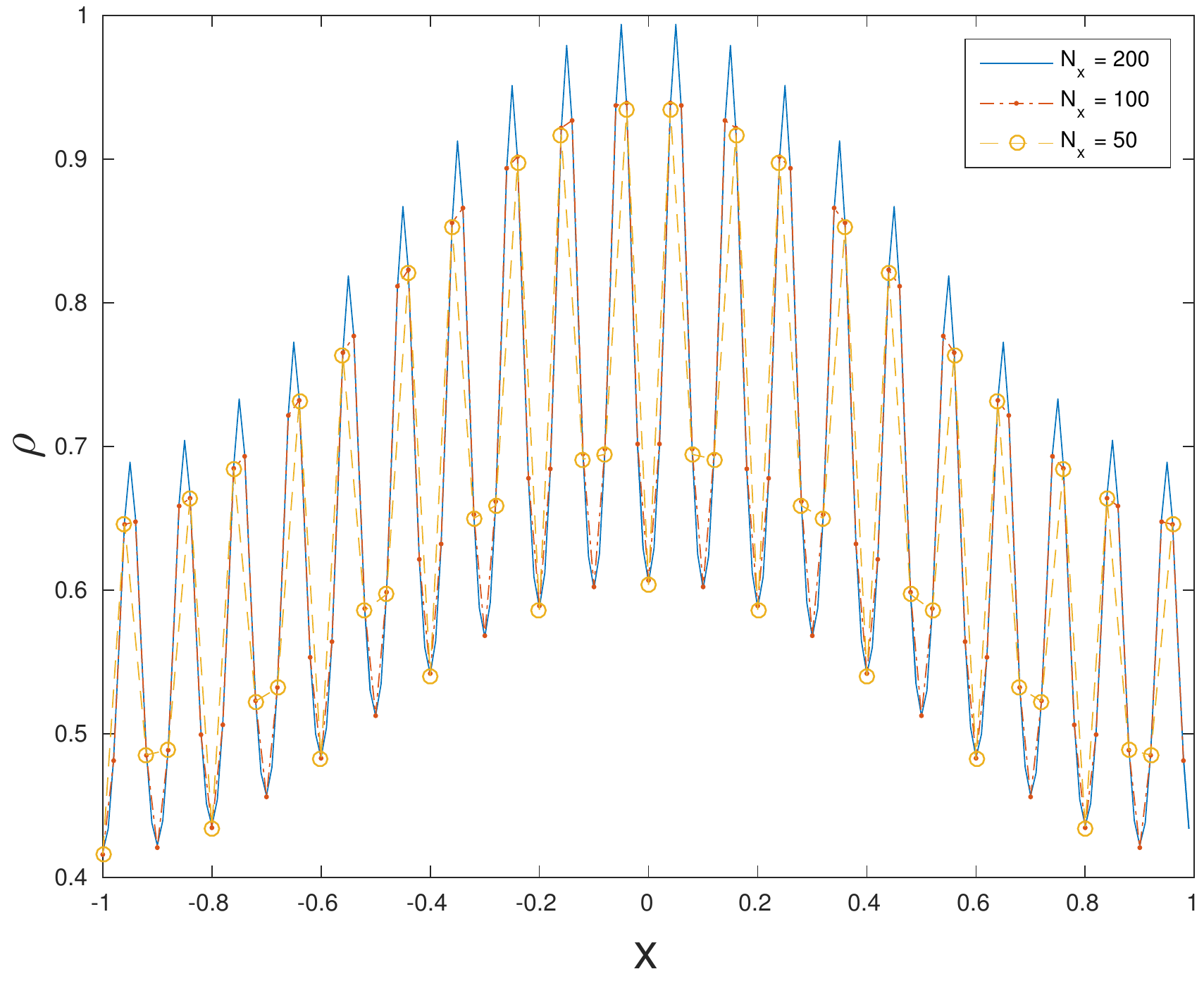}
\includegraphics[width = 0.45\textwidth]{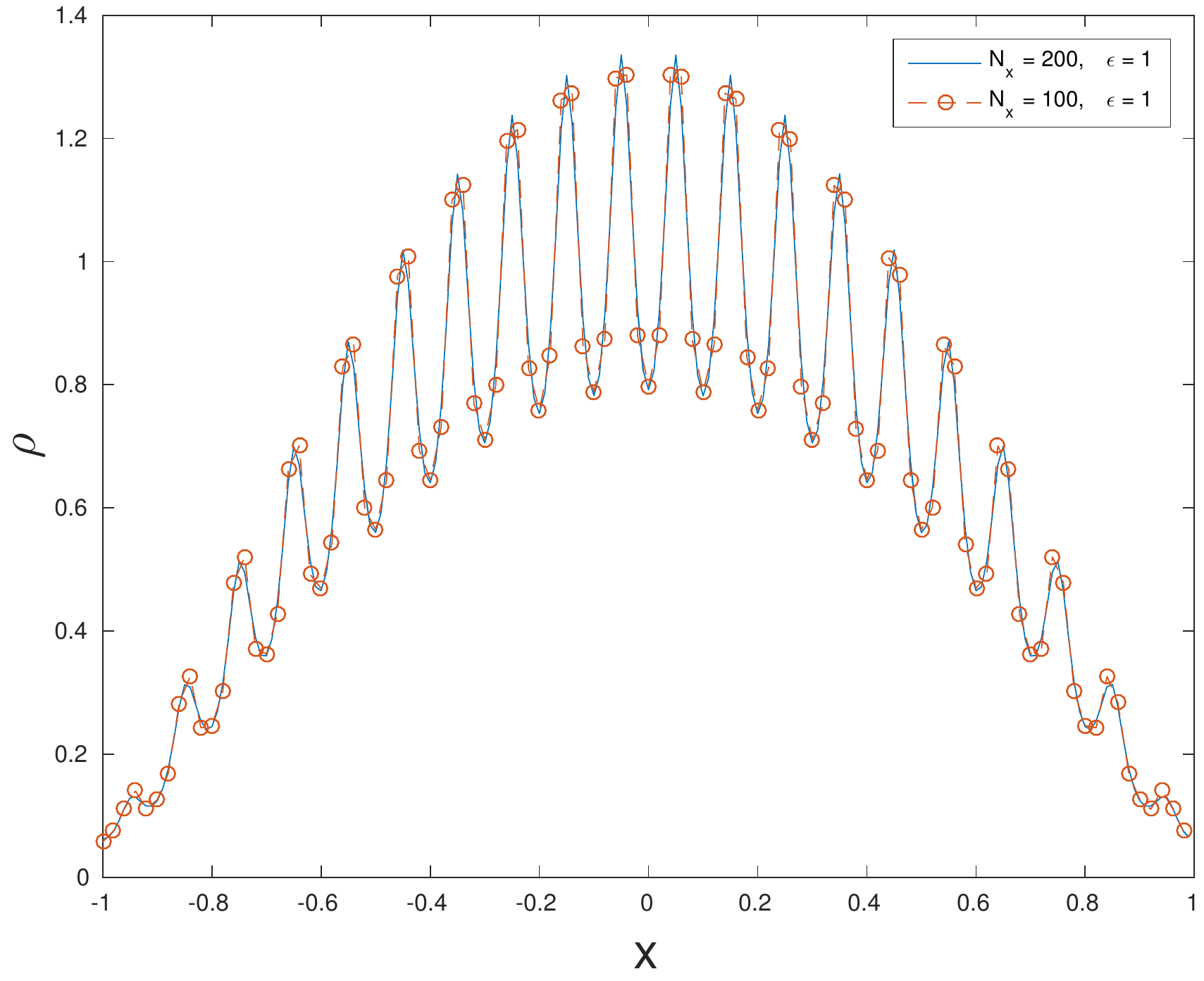}
\caption{The numerical solution is captured well with under-resolved grid points for both heat limit and the transport equation. The transport equation (on the right) is computed with $\varepsilon = 1$.}\label{fig:1D_20pi_resolve}
\end{figure}

\begin{figure}[htbp]
\centering
\includegraphics[width = 0.47\textwidth,height = 2.5in]{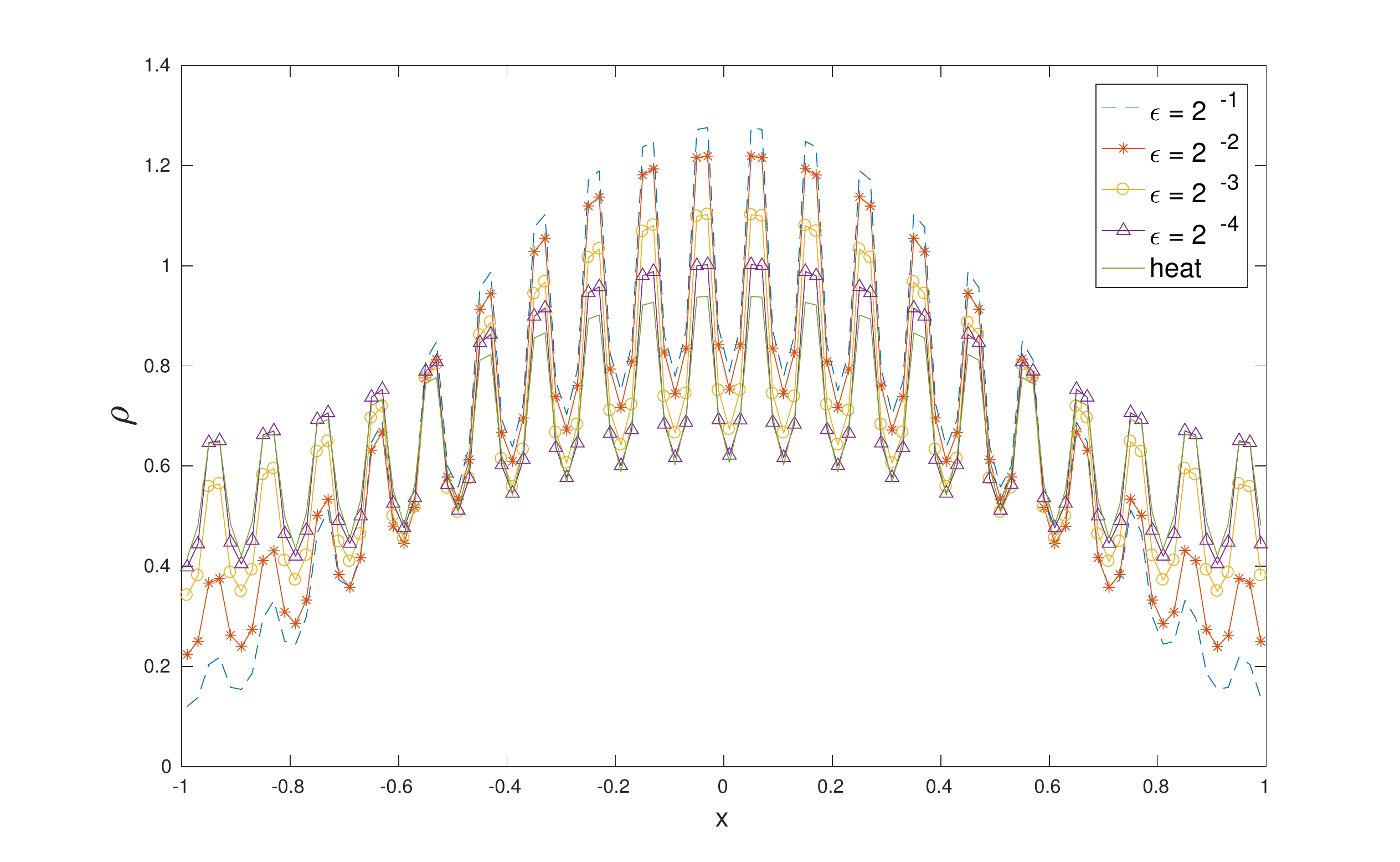}
\includegraphics[width = 0.5\textwidth,height = 2.5in]{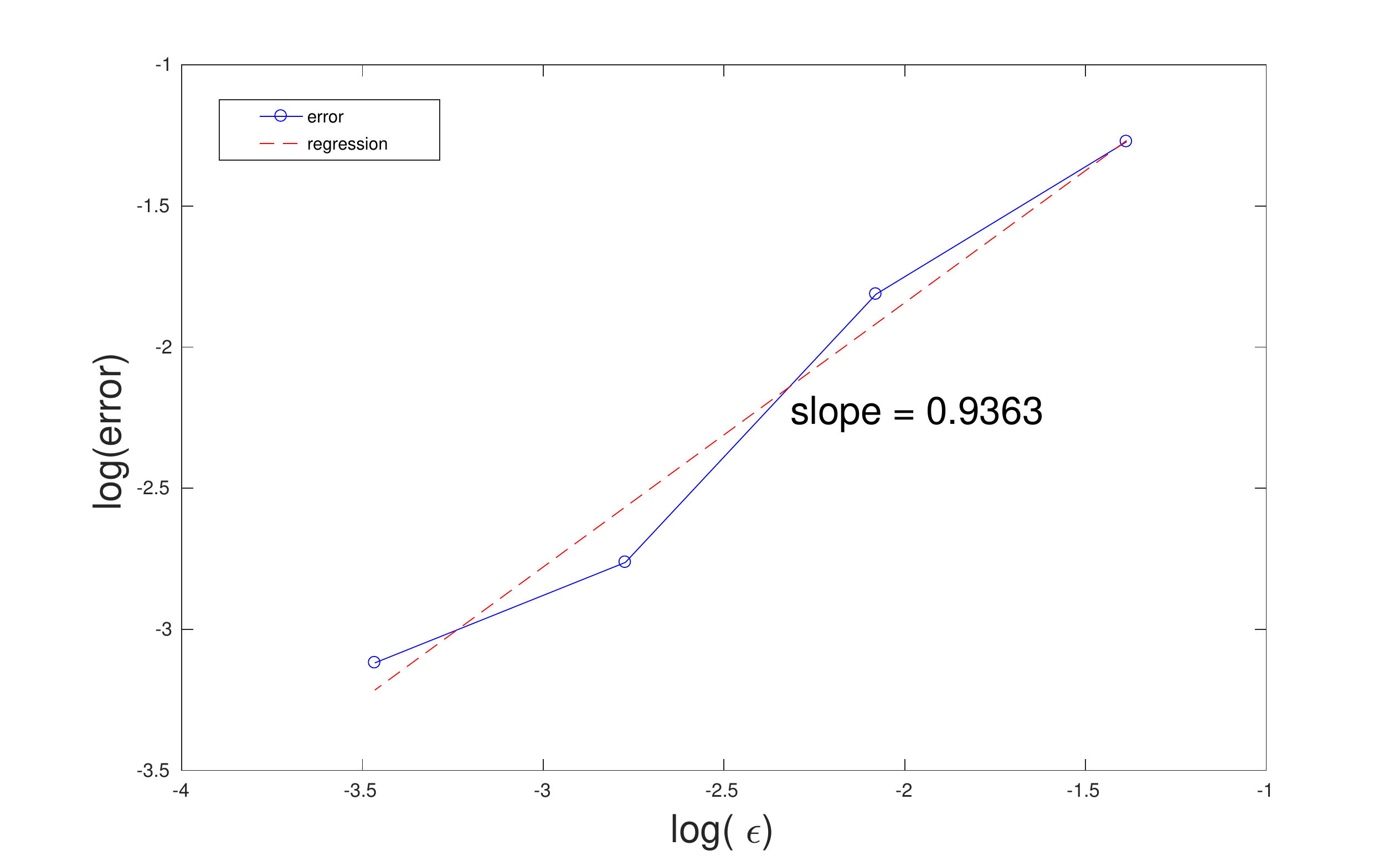}
\caption{With shrinking Knudsen number $\varepsilon$, the numerical solution for the transport equation converges to that to of the heat equation. The convergence rate is about $\mathcal{O}(\varepsilon)$. Here the heat equation is computed with full-resolved mesh while for the computation of the transport equation, we use the under-resolved $N_x=100$ mesh points.}\label{fig:1D_20pi_limit}
\end{figure}

\subsection{1D example, convergence in $\delta$}
In this example we check the convergence in $\delta$. Since we are in 1D, the analytical homogenized coefficient could be explicitly expressed, and the error, according to Theorem~\ref{thm:diff_limit} and Remark~\ref{rmk:conv_1D} should be of order $\mathcal{O}(\delta)$. In the domain $[-1,1]$, we set the media as:
\begin{equation}
a(x) = \frac{1}{\cos(2\pi x/\delta) + 4}
\end{equation}
and thus $a^\ast = \frac{1}{4}$. The regime being studied in this paper requires $\varepsilon\ll \delta$, and thus we choose $\delta = [1/8,1/24,1/40,1/56]$ and $\varepsilon = 2^{-10}$. $H = 1/32$ while $h=1/1280$. We plot the solution with various $\delta$ at $T = 0.1$ together with the solution to the homogenized heat solution in Figure~\ref{fig:1D_conv_compare} and we also show the convergence rate.
\begin{figure}[htbp]
\centering
\includegraphics[width = 0.47\textwidth,height = 2.5in]{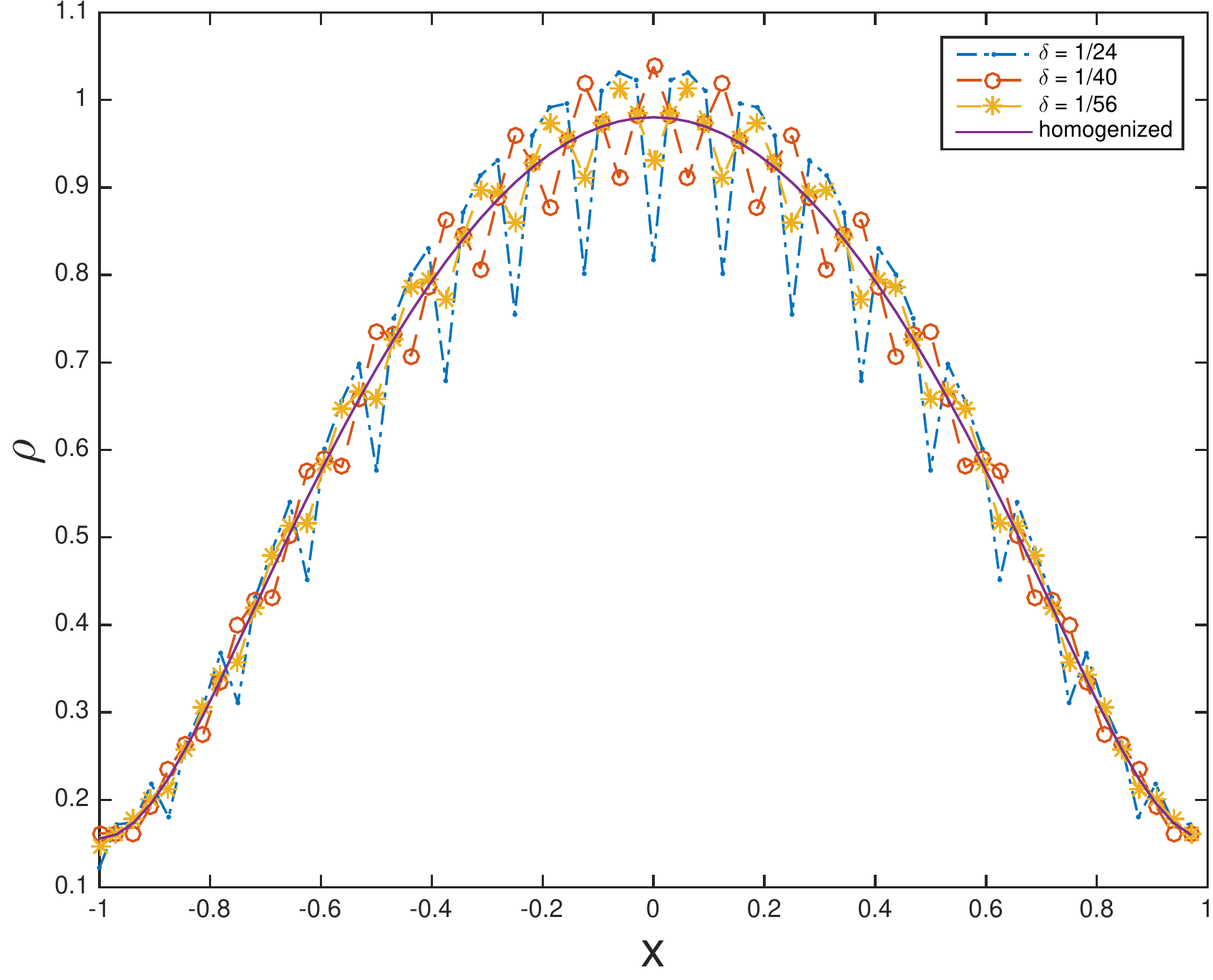}
\includegraphics[width = 0.47\textwidth,height = 2.5in]{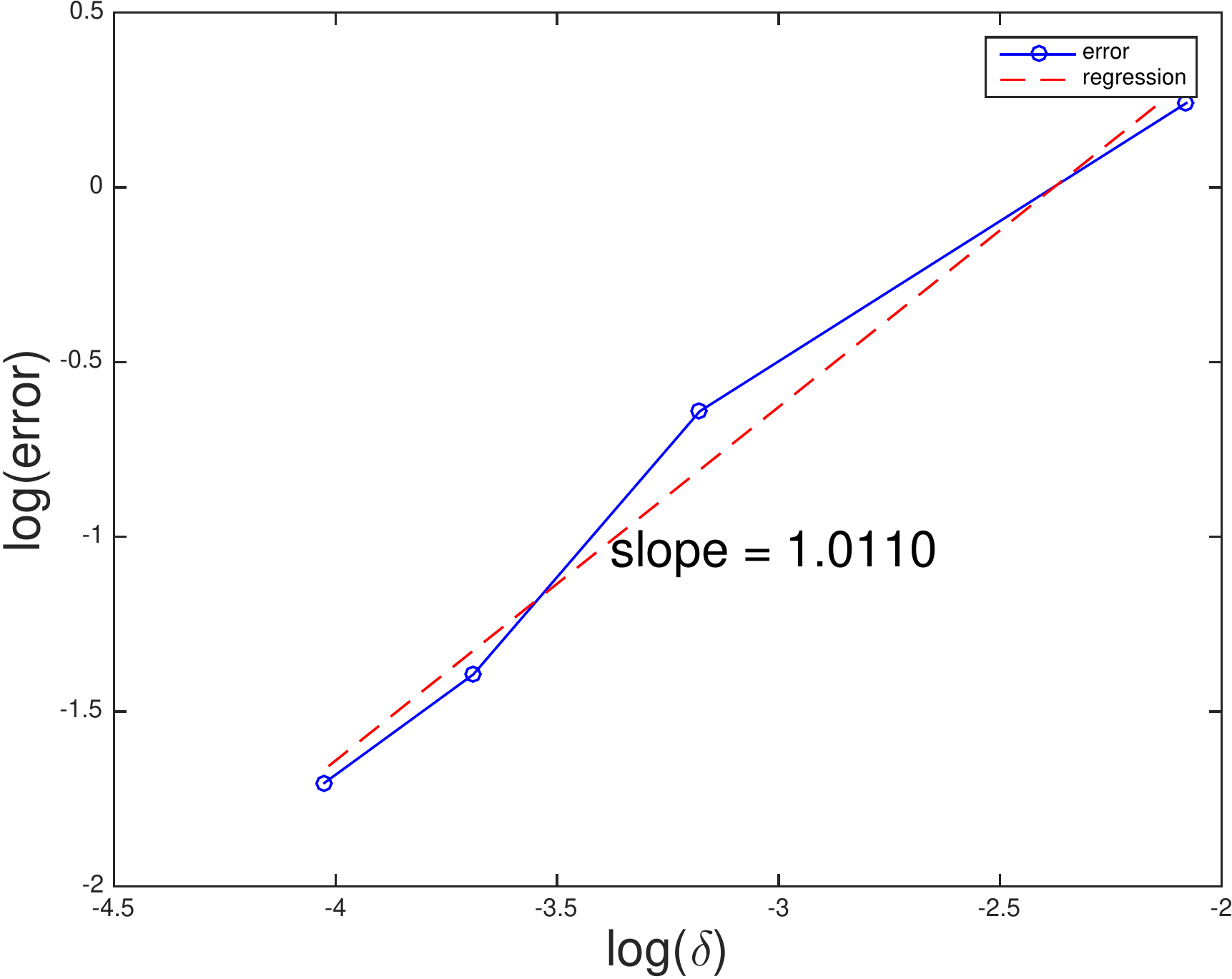}
\caption{The plot on the left panel shows the solution to $\rho$ with different $\delta$ compared with the homogenized heat equation limit. On the right we plot the convergence rate in terms of $\delta$. It shows $\mathcal{O}(\delta)$ convergence. This is aligned with our prediction in 1D.}\label{fig:1D_conv_compare}
\end{figure}

\subsection{2D example}
In the third example we check the solution behavior in 2D. We still have periodic media set as:
\begin{equation*}
a(x,y) = 1.1+\sin{(2\pi x)}\sin{(10\pi y)}\,,\quad\text{with}\quad (x,y)\in[-1,1]\times[-1,1]\,.
\end{equation*}
Here the oscillation along $y$ is much heavier than that in $x$. The media is plotted in Figure~\ref{fig:2D_media}.

\begin{figure}[htbp]
\centering
\includegraphics[width = 0.45\textwidth]{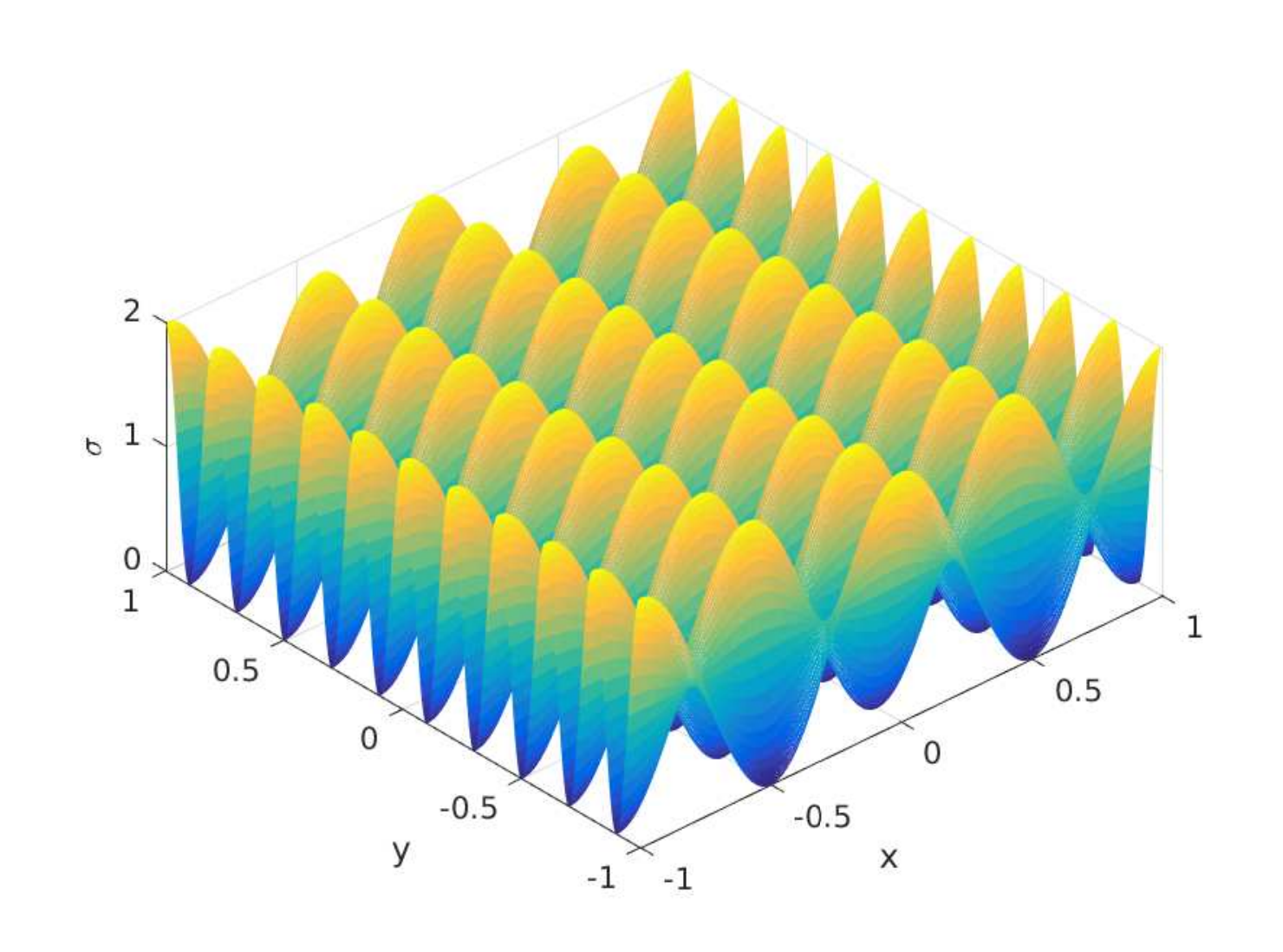}
\caption{The media is periodic but highly oscillatory in both directions and the oscillation in $y$ direction is very strong. We put 10 periods along $y$.}\label{fig:2D_media}
\end{figure}

To test the consistency of the method we compute the heat equation limit with $N=50$ and $N=100$. The differences between the two solutions are negligible, as shown in Figure~\ref{fig:2D_resolve}.

\begin{figure}[htbp]
\centering
\includegraphics[width = 0.85\textwidth,height = 0.25\textheight]{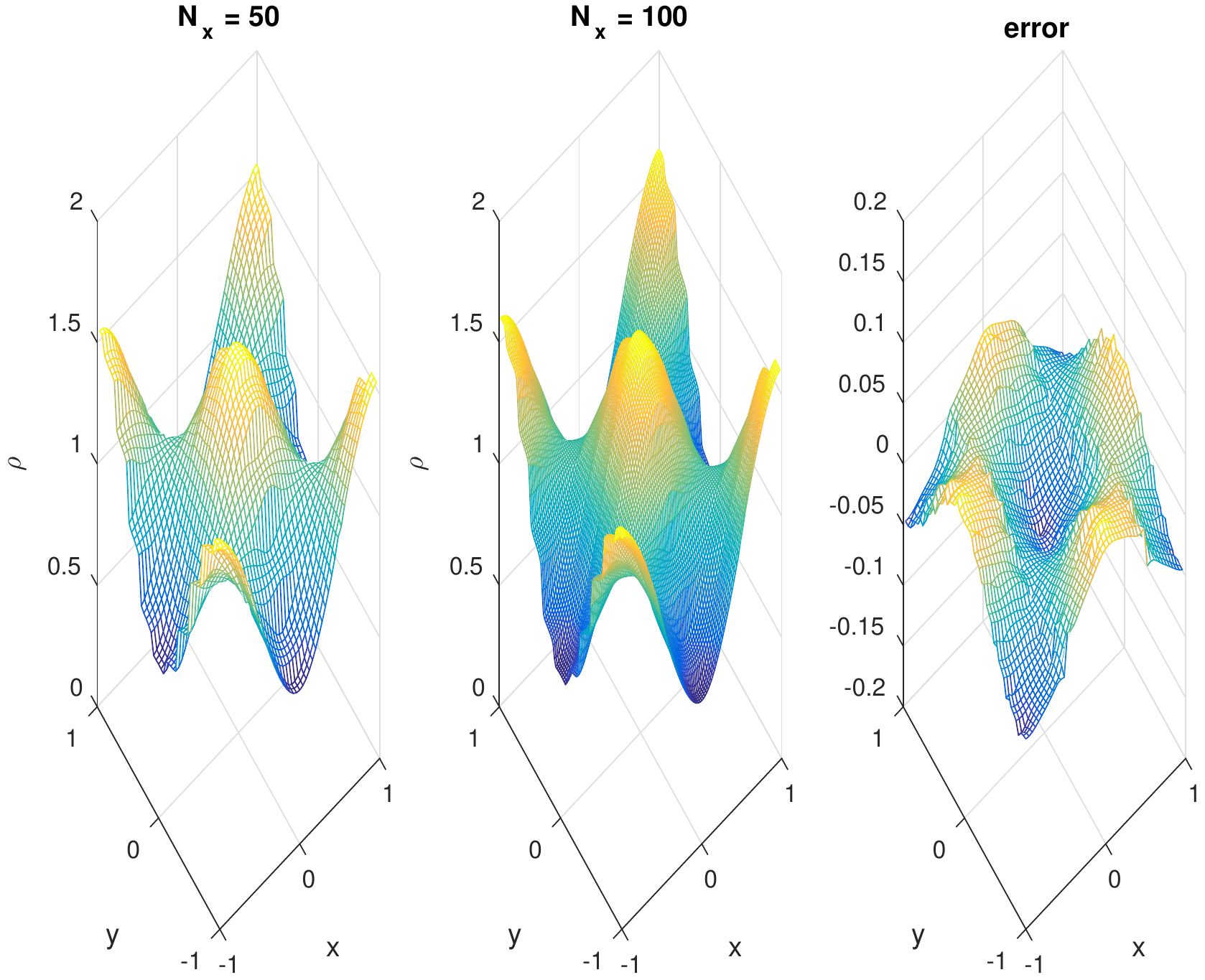}
\caption{Compare the numerical solution to the heat limit equation using $N_x = 50$ and $N_x=100$. Here by setting $N_x=50$ we obtain an under-resolved solution. The error shown in the right panel suggest $0.05$ out of $1.5$ error.}\label{fig:2D_resolve}
\end{figure}

In Figure~\ref{fig:2D_error} and~\ref{fig:2D_limit} we show the convergence of the transport equation with highly oscillatory media towards the heat limit with the same oscillatory media.

\begin{figure}[htbp]
\centering
\includegraphics[width = 0.85\textwidth,height = 0.25\textheight]{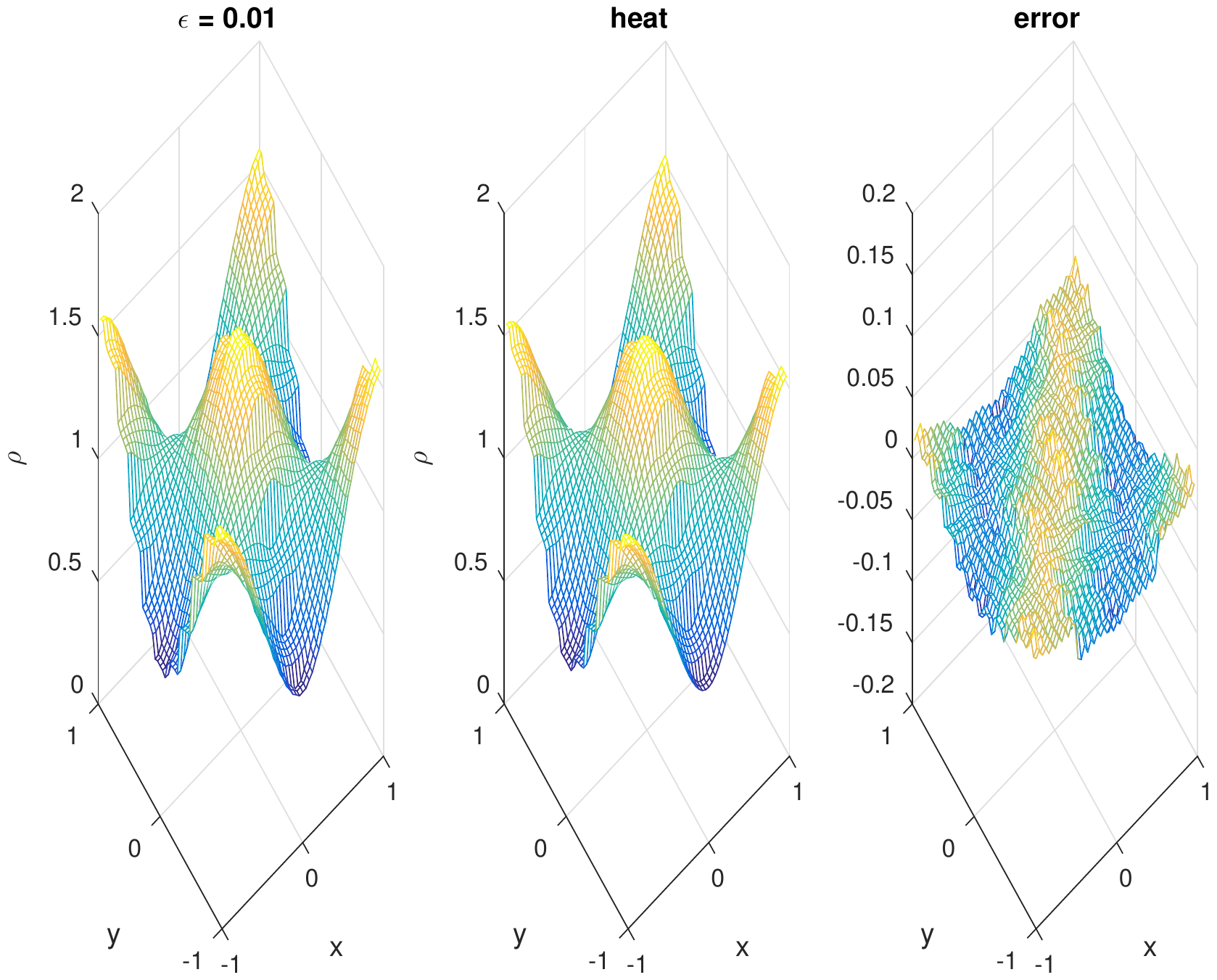}
\caption{We compare the solution to the transport equation with $\varepsilon = 0.01$ (left panel) with the solution to the heat limit (middle panel), and it is very well captured. The error is plotted in the right panel.}\label{fig:2D_error}
\end{figure}

\begin{figure}[htbp]
\centering
\includegraphics[width = 0.52\textwidth,height = 0.32\textheight]{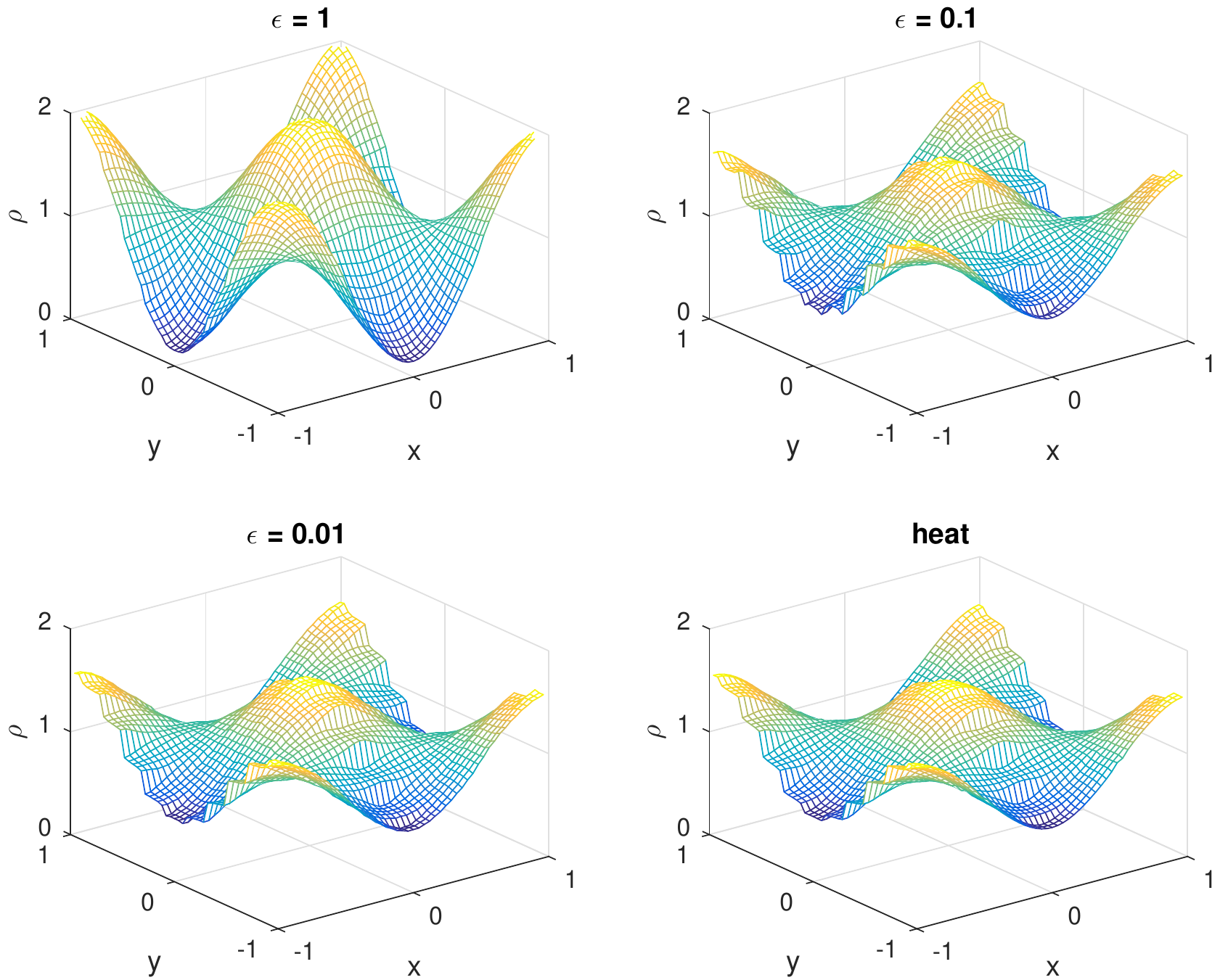}\\
\includegraphics[width = 0.32\textwidth,height = 0.25\textheight]{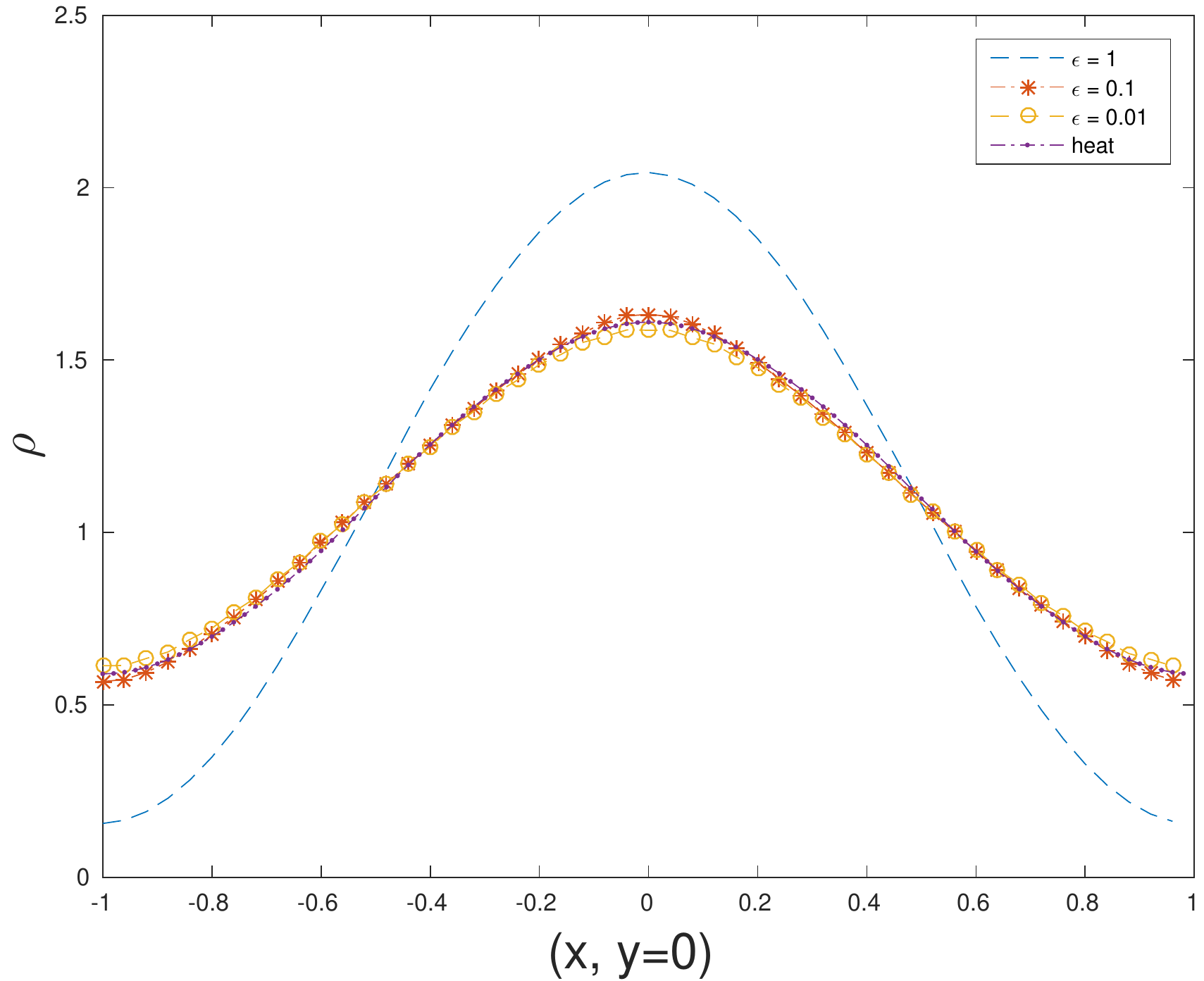}
\includegraphics[width = 0.32\textwidth,height = 0.25\textheight]{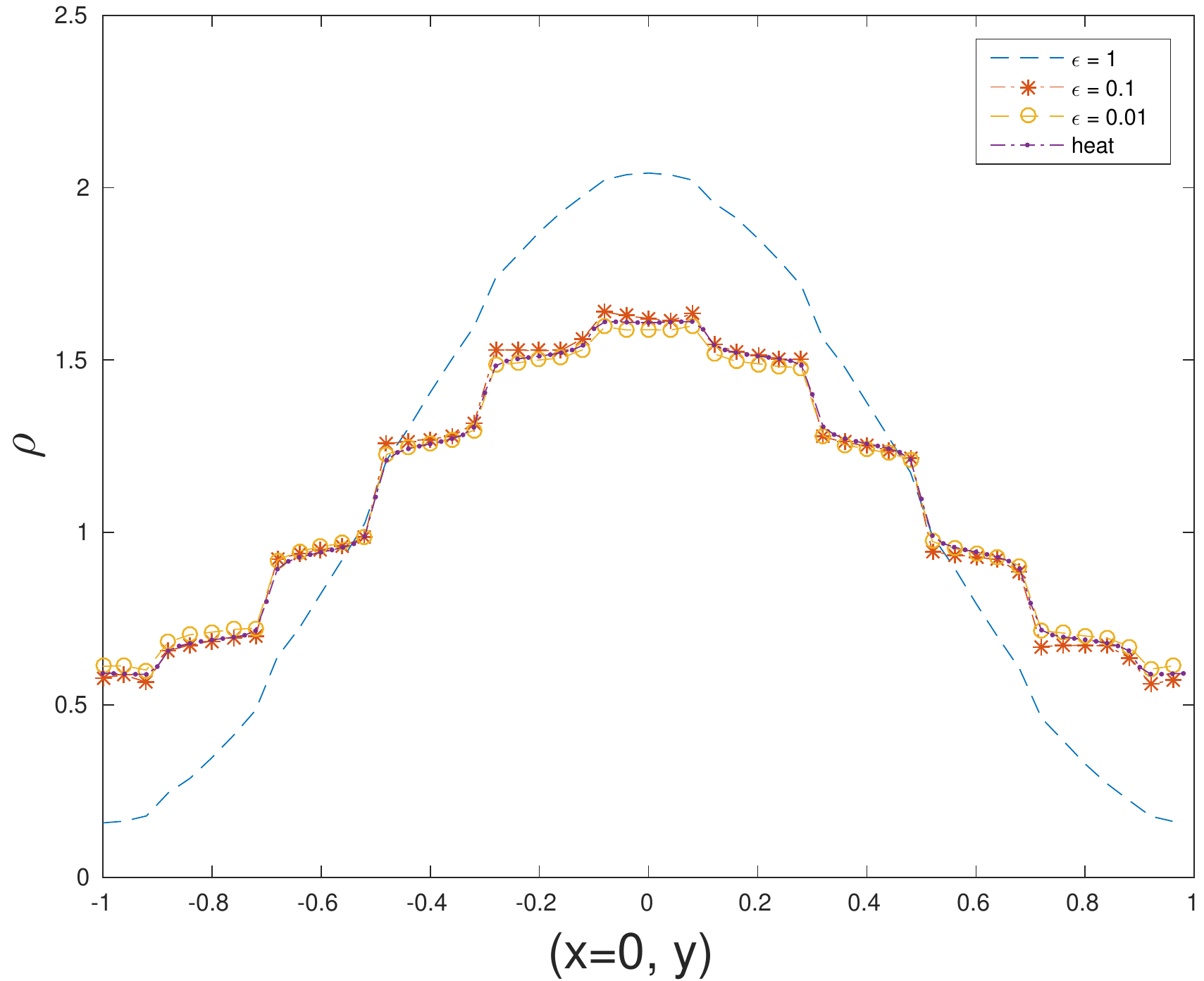}
\caption{With shrinking Knudsen number $\varepsilon$, the numerical solution for the transport equation gradually converges to that to of the heat equation. We also plot two intersections: the middle panel we show the solution with along $x$ direction at $y=0$ and the right panel shows the comparison of the solution along $y$ direction at $x=0$. The solution along $x$ is smooth since the oscillation is not as strong. Along $y$ direction, the solution experience some big jumps but they are all captured well.}\label{fig:2D_limit}
\end{figure}

\subsection{Benchmark example for symmetric and asymmetric formulations}
In this example we adopt the media used in~\cite{EHW:00,HW_numerics:99}:
\begin{equation*}
a(x,y) = \frac{2+1.8\sin{(10\pi x)}}{2+1.8\cos{(10\pi y)}}+\frac{2+\sin{(10\pi y)}}{2+1.8\sin{(10\pi x)}}\,.
\end{equation*}
The media is plotted in Figure~\ref{fig:2D_bench_media}.

\begin{figure}[htbp]
\centering
\includegraphics[width = 0.85\textwidth,height = 0.25\textheight]{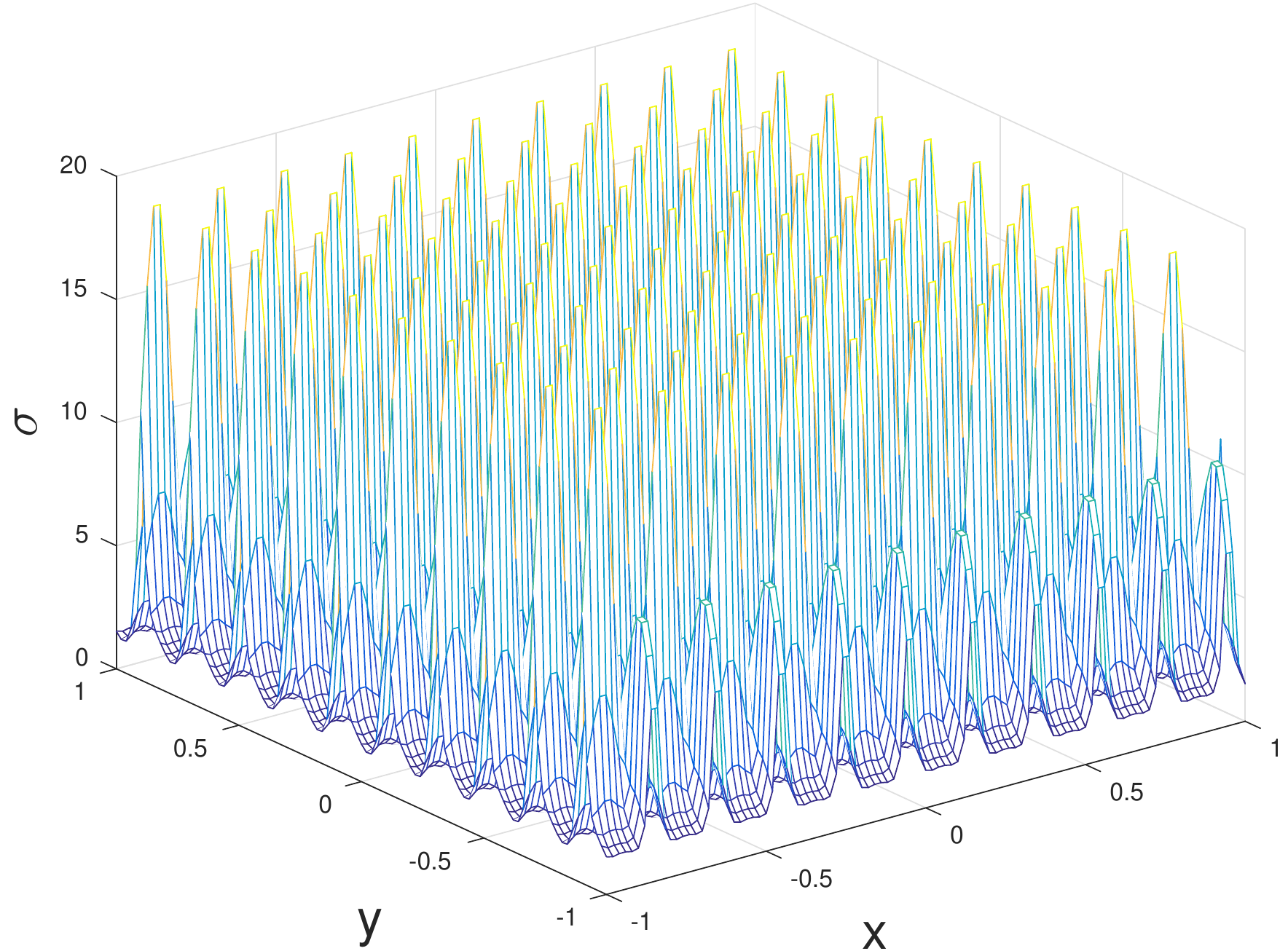}
\caption{Media is adopted from~\cite{EHW:00,HW_numerics:99}. It is periodic and highly oscillatory in both directions.}\label{fig:2D_bench_media}
\end{figure}

As mentioned in Remark~\ref{rmk:asym_limit} before, the asymmetric formulation of the heat equation is not preferred, we here plot the solution to the limiting heat equation using both the symmetric and asymmetric formulation. Compare the two numerical results shown in Figure~\ref{fig:2D_bench_heat_resolution}, it is obvious the asymmetric formulation failed to maintain the symmetric solution profile. We then demonstrate the AP property in Figure~\ref{fig:2D_bench_AP_comparison}, where the error obtained using different $\varepsilon$ is shown. We also show the convergence of the solution to the transport equation to that of the heat equation on two intersections ($(x=0,y)$ and $(x,y=0)$).

\begin{figure}[htbp]
\centering
\includegraphics[width = 0.85\textwidth,height = 0.25\textheight]{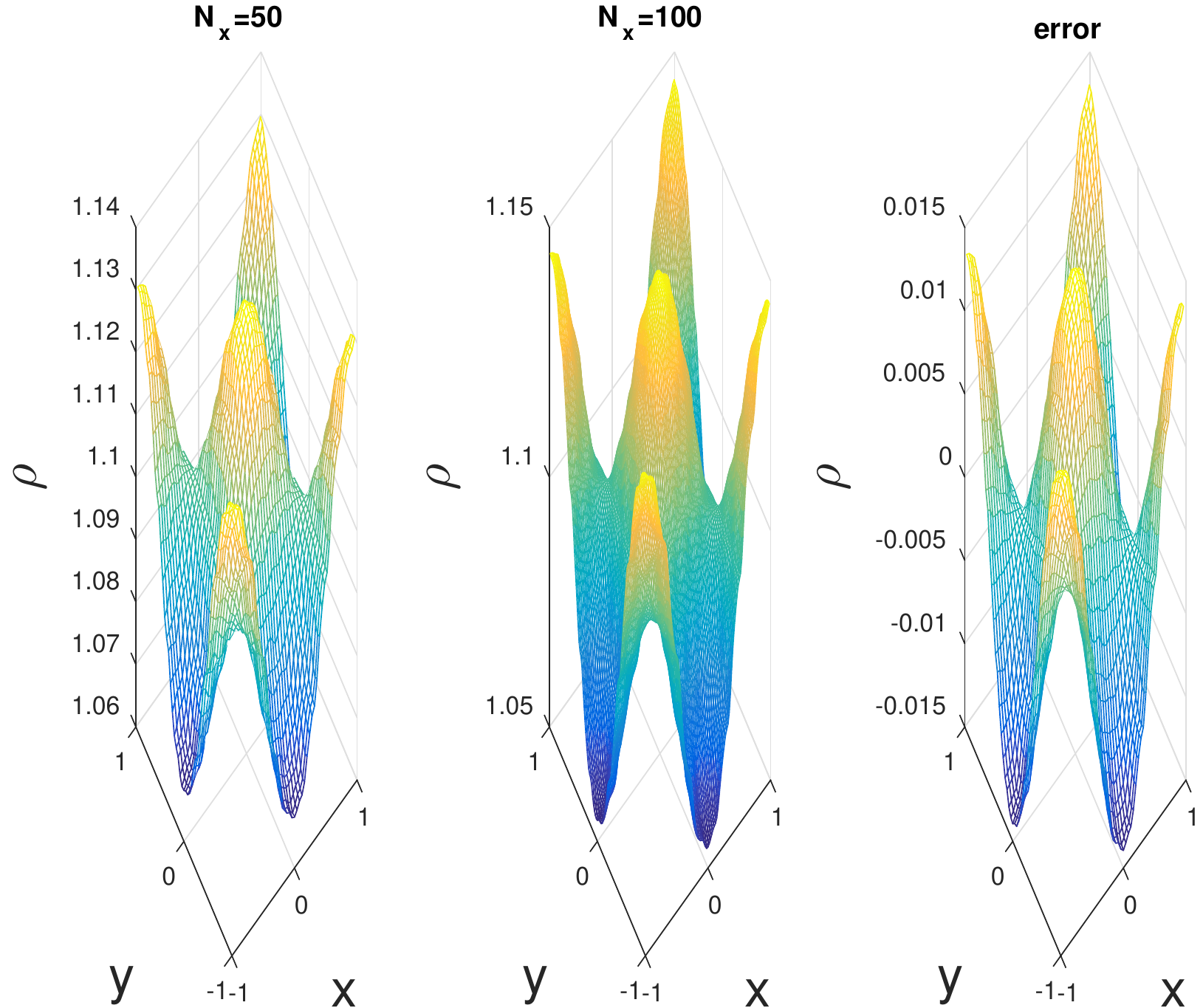}
\includegraphics[width = 0.85\textwidth,height = 0.25\textheight]{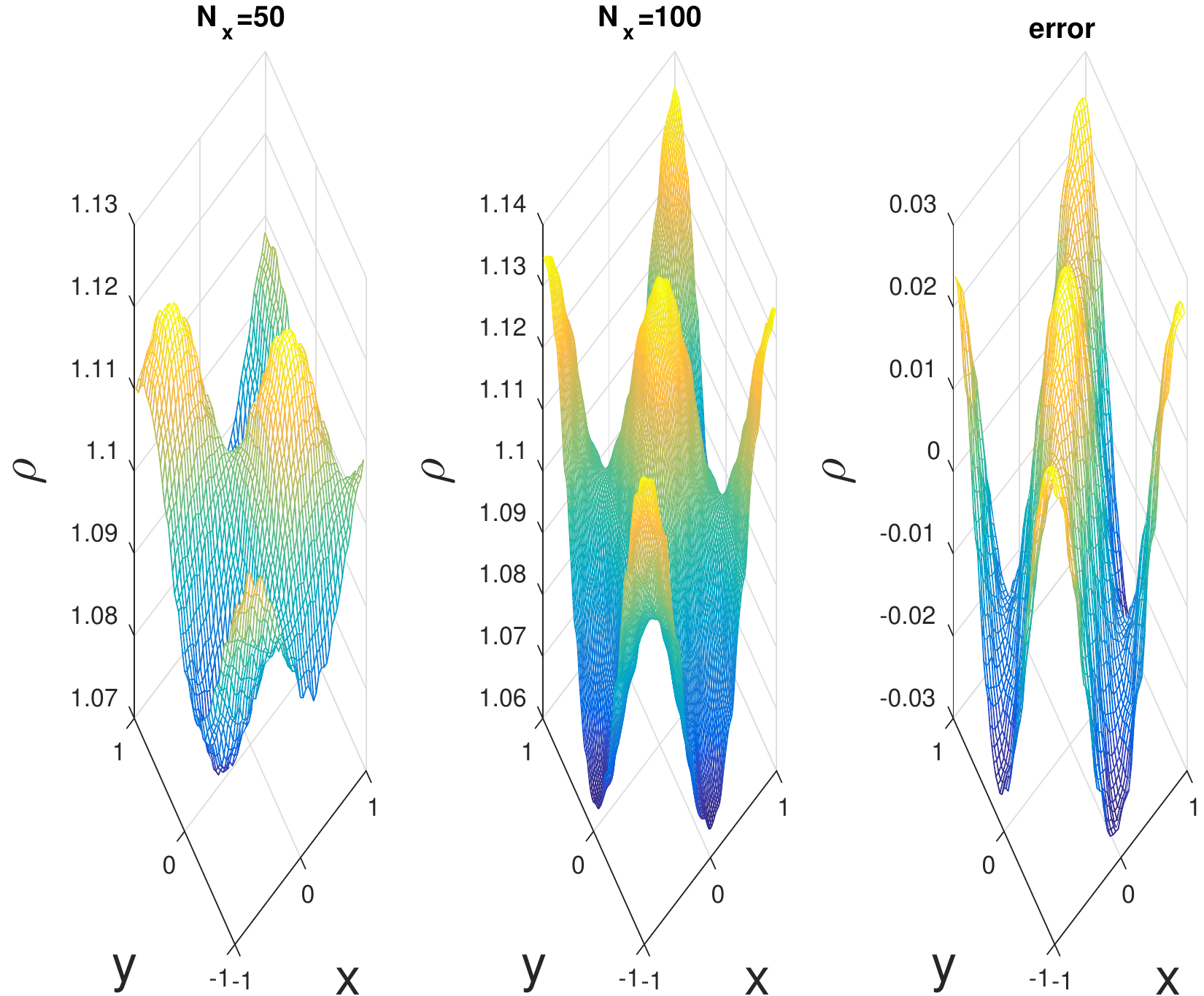}
\caption{The top three figures show the comparison of the solutions computed using coarse and fine resolution given by the symmetric formulation, and the bottom three show the comparison of the solution using the asymmetric formulation. The asymmetric profile is obvious in the bottom left figure which shows that using the asymmetric formulation the numerical result fails to capture the symmetric profile in the solution.}\label{fig:2D_bench_heat_resolution}
\end{figure}

\begin{figure}[htbp]
\centering
\includegraphics[width = 0.85\textwidth,height = 0.25\textheight]{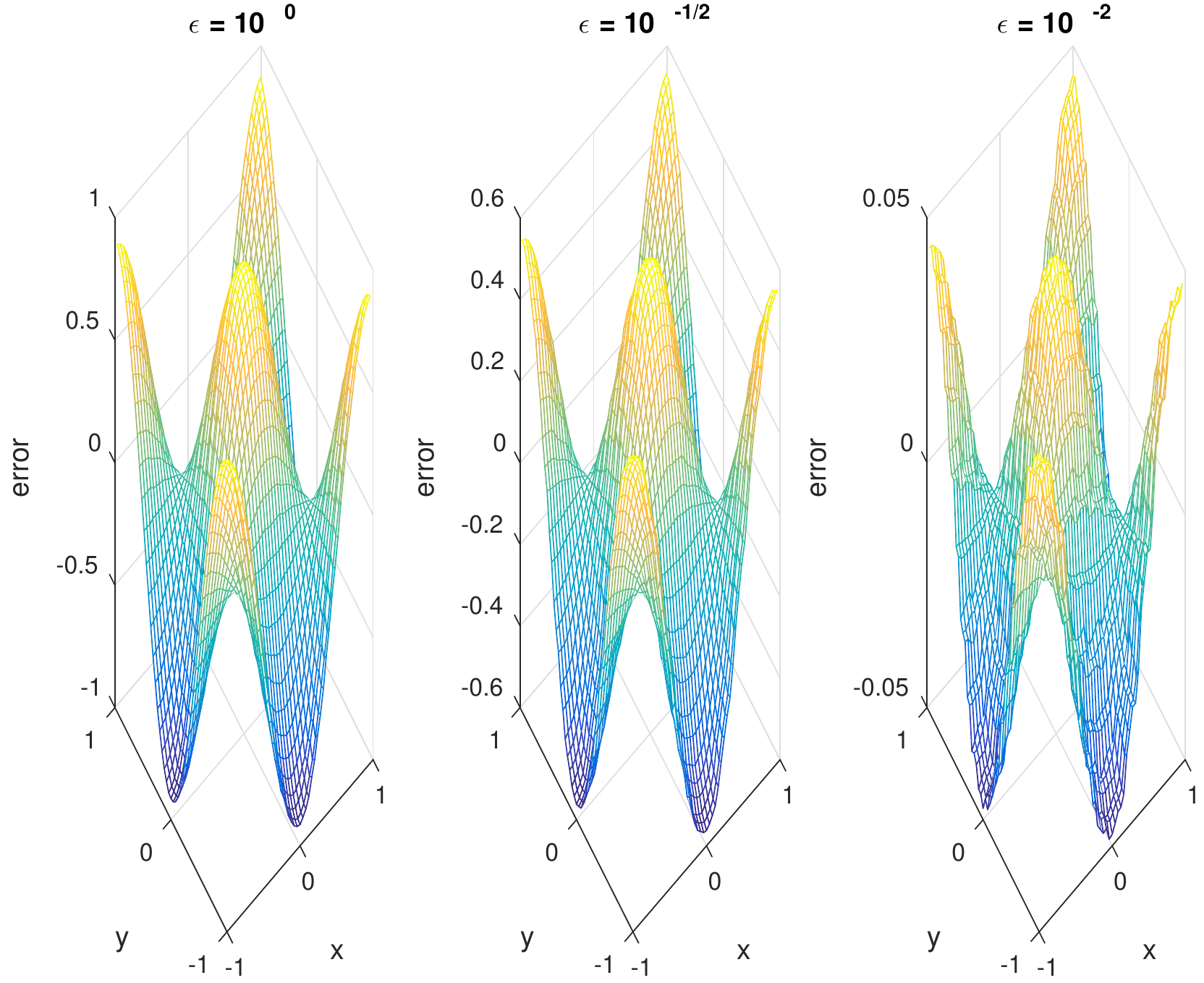}
\includegraphics[width = 0.45\textwidth,height = 0.25\textheight]{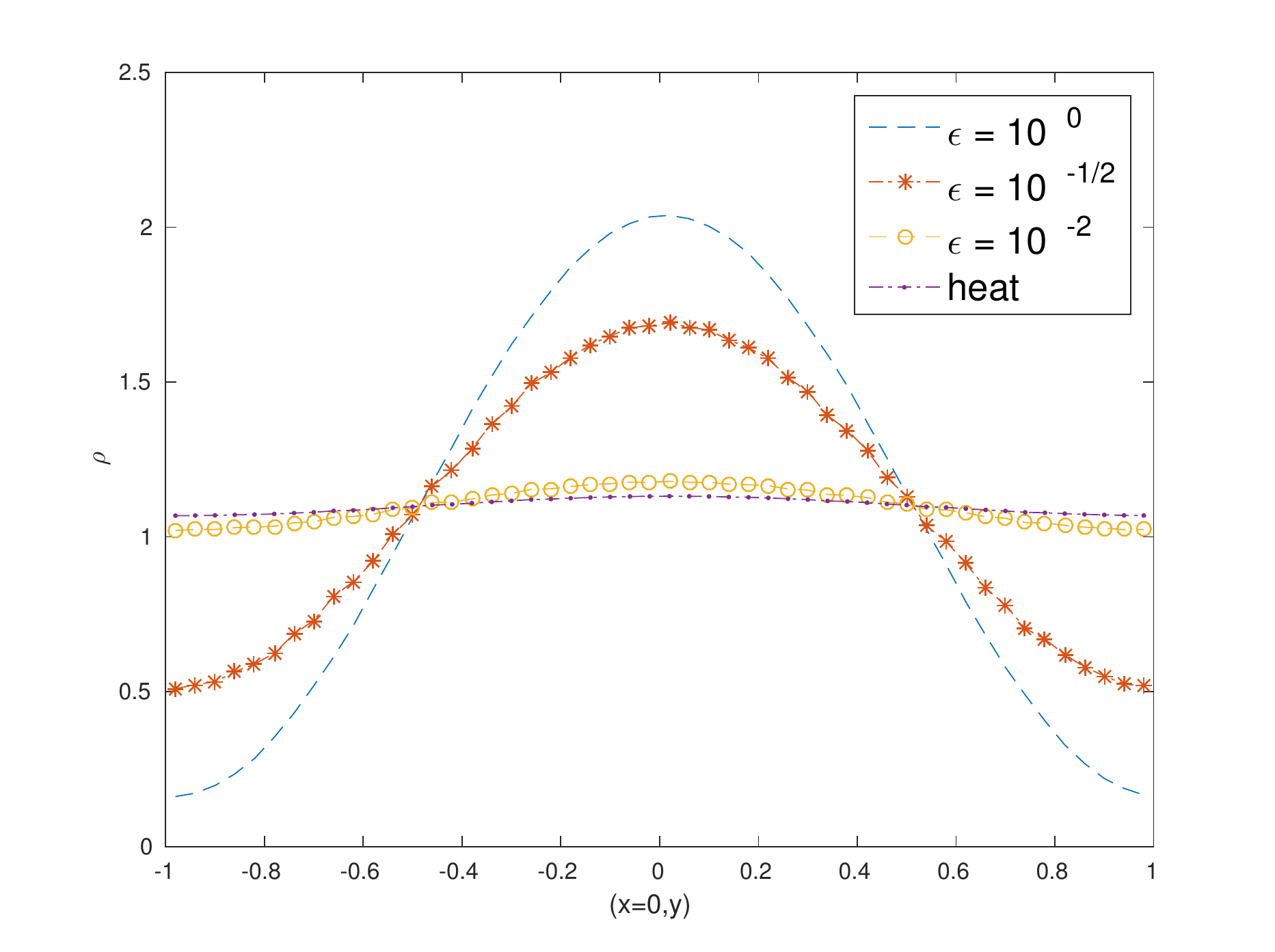}
\includegraphics[width = 0.45\textwidth,height = 0.25\textheight]{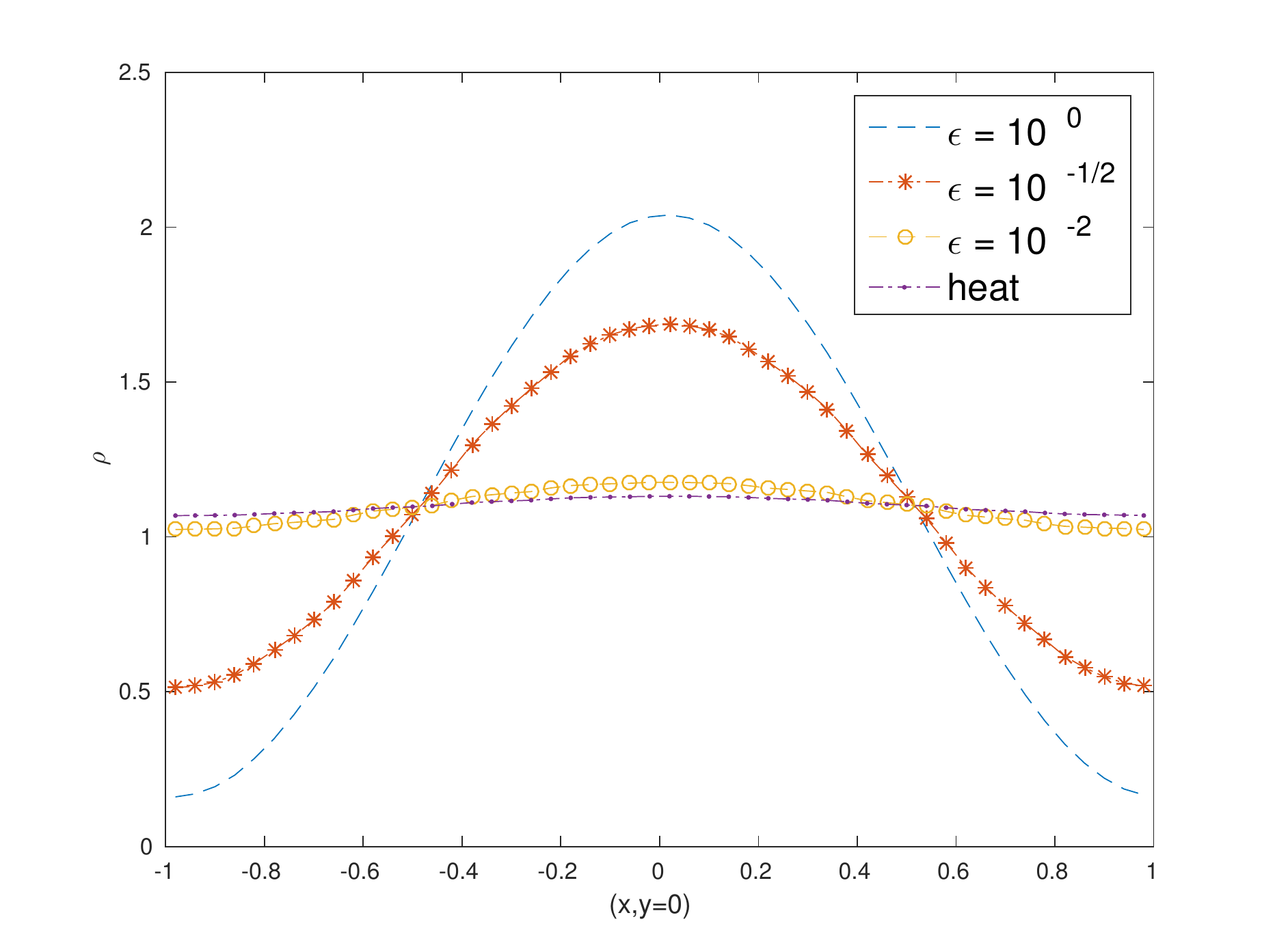}
\caption{The top three figures show the error of the solution to the transport equation and the solution to the limiting heat equation with $\varepsilon = 10^{0}, 10^{-1/2}$ and $10^{-2}$ respectively. It is obvious that as $\varepsilon$ converges to zero, the error shrinks as well. The bottom two figures show the intersection at $x=0$ and $y=0$ respectively.}\label{fig:2D_bench_AP_comparison}
\end{figure}

\bibliographystyle{abbrv}
\bibliography{transport}

\end{document}